\newcommand{\eps}{\varepsilon}
\newcommand{\nint}[1]{[#1]}
\newcommand{\G}{\Gg}
\newcommand{\E}{E}
\newcommand{\V}{V}
\newcommand{\Inc}{\mathbf{D}}
\newcommand{\Inct}{{\mathbf{D}^\top}}
\newcommand{\Lap}{\mathbf{L}}
\newcommand{\x}{\beta}
\newcommand{\xs}{\beta^\star}
\newcommand{\xe}{\hat{\beta}}
\newcommand{\norms}[1]{\norm{#1}_{[\lambda]}}
\newcommand{\GSlope}{Graph-Slope\xspace}
\newcommand{\Slope}{Slope\xspace}
\newcommand{\GLasso}{Graph-Lasso\xspace}
\begin{document}

\begin{frontmatter}
\title{A sharp oracle inequality for \GSlope}
\runtitle{A sharp oracle inequality for \GSlope}

\begin{aug}
  \author{\fnms{Pierre C} \snm{Bellec}
    \ead[label=e1]{pcb71@stat.rutgers.edu}
  \ead[label=u1,url]{http://www.stat.rutgers.edu/home/PCB71/}}
\address{Department of Statistics \& Biostatistics,\\
  Rutgers, The State University of New Jersey,\\
  501 Hill Center, Busch Campus,\\
  110 Frelinghuysen Road, Piscataway, NJ 08854,\\
  \printead{e1}\\
  \printead{u1}}

\and

\author{\fnms{Joseph} \snm{Salmon}\thanksref{t1}
  \ead[label=e2]{first.lastname@telecom-paristech.fr}
\ead[label=u2,url]{http://josephsalmon.eu}}
\address{LTCI, T\'el\'ecom ParisTech,\\
Universit\'e Paris-Saclay, 75013, Paris,\\
\printead{e2}\\
\printead{u2}}

\and

\author{\fnms{Samuel} \snm{Vaiter}\thanksref{t1}
  \ead[label=e3]{samuel.vaiter@u-bourgogne.fr}
  \ead[label=u3,url]{http://samuelvaiter.com}}
\address{CNRS \& IMB, Universit\'e de Bourgogne,\\
9 avenue Alain Savary, 21000, Dijon,\\
\printead{e3}\\
\printead{u3}}

\thankstext{t1}{J. Salmon and S. Vaiter were supported by the CNRS PE1 ``ABS'' grant.}
\runauthor{Bellec et al.}

\affiliation{Rutgers and Telecom ParisTech and CNRS}

\end{aug}

\begin{abstract}

Following recent success on the analysis of the Slope estimator, we provide a sharp oracle inequality in term of prediction error for Graph-Slope, a generalization of Slope to signals observed over a graph. In addition to improving upon best results obtained so far for the Total Variation denoiser (also referred to as Graph-Lasso or Generalized Lasso), we propose an efficient algorithm to compute Graph-Slope.
The proposed algorithm is obtained by applying the forward-backward method to the dual formulation of the Graph-Slope optimization problem.
We also provide experiments showing the practical applicability of the method.

\end{abstract}

\begin{keyword}[class=MSC]
\kwd[Primary ]{62G08} 
\kwd[; Secondary ]{62J07} 
\end{keyword}

\begin{keyword}
\kwd{denoising}
\kwd{graph signal regularization}
\kwd{oracle inequality}
\kwd{convex optimization}
\end{keyword}
\tableofcontents
\end{frontmatter}


\section{Introduction}
\label{sec:intro}

Many inference problems of interest involve signals defined on discrete graphs.
This includes for instance two-dimensional imaging but also more advanced hyper-spectral imaging scenarios where the signal lives on a regular grid.
Two types of structure arise naturally in such examples:
The first type of structures comes from regularity or smoothness of the signal, which led to the development of wavelet methods.
The second type of structure involves signals with few sharp discontinuities.
For instance in one dimension, piecewise constant signals appear when transition states are present, the graph being a 1D path.
In imaging, where the underlying graph is a regular 2D grid, occlusions create piece-wise smooth signals rather than smooth ones.

This paper studies regularizers for signals with sharp discontinuities.
A popular choice in imaging is the Total Variation (TV) regularization \cite{Rudin_Osher_Fatemi92}.
For 1D signals, TV regularization has also long been used in statistics \cite{Mammen_vandeGeer}. If an additional $\ell_1$ regularization is added, this is sometimes referred to as the fused Lasso \cite{Tibshirani_Saunders_Rosset_Zhu_Knight05,viallon2016robustness,Dalalyan_Hebiri_Lederer17}.

A natural extension of such methods to arbitrary graphs relies on $\ell_1$ analysis penalties \cite{Elad_Milanfar_Rubinstein07}
which involve the incidence matrix of the underlying graph,
see for instance \cite{sadhanala2016total} or the Edge Lasso of \cite{sharpnack2012sparsistency}.
Such penalties have the form
\begin{equation}
    \text{pen }: \x\rightarrow \lambda \| \Inct\x \|_1\enspace,
\end{equation}
where $\lambda > 0$ is a tuning parameter and $\Inct $ is the (edge-vertex) incidence matrix of the graph defined below, and $\beta$ represents the signal to be recovered.
This approach is notably different from contributions in machine learning where $\ell_2$ penalties, \ie Laplacian regularization, have been considered for spectral clustering \cite{shi2000normalized,ng2001spectral}  (see also \cite{von2007tutorial} for a review). Theoretical results in favor of the $\ell_1$ norm instead of the squared $\ell_2$ norm are studied in \cite{sadhanala2016total}.

Penalties based on $\ell_0$ regularization with the graph incidence matrix have recently been analyzed \cite{fan2017pw}, including an analysis of their approximation algorithm. They are of interest as they do not suffer from the (shrinkage) bias created by the convex $\ell_1$ norm. However, such methods present the difficulty that in the general case they lead to non-convex problems.
Note that the 1D path is an exception since the associated optimization problem can be solve using dynamic programming~\cite{auger1989algorithms}.
Concerning the bias reduction though, simpler remedies could be used, including least-squares refitting on the model space associated, applying for instance the CLEAR method \cite{Deledalle_Papadakis_Salmon_Vaiter16}.

Following the introduction of the Slope regularization in the context of high dimensional regression \cite{bogdan2015slope}, we propose \GSlope, its generalization to contexts where the signal is supported on a graph.
In linear regression, Slope \cite{bogdan2015slope} is defined as follows.
Given $p$ tuning parameters $\lambda_1\ge\lambda_2\ge\dots\ge\lambda_p\ge 0$ with at least one strict inequality,
define the ordered $\ell_1$ norm by
\begin{equation}
  \norms{\theta} = \sum_{j=1}^p \lambda_j |\theta|_j^\downarrow \enspace,
  \label{eq:def-ordered-ell-1-norm}
\end{equation}
where for any $\theta\in  \bbR^p$, we use the notation\footnote{following the notation considered in \cite{Bhatia97}} $(|\theta|_1^\downarrow,\dots,|\theta|^\downarrow_p)$ for the non-increasing rearrangement of its amplitudes $(|\theta_1|,\dots,|\theta_p|) $.
Then, given a design matrix $X\in\bbR^{n\times p}$ and a response vector $y\in\bbR^n$, the Slope estimator is defined as a solution of the
minimization problem
\begin{equation}
    \min_{b\in\bbR^p} \frac{1}{2n}\|y - X\beta\|^2 + \norms{\beta} \enspace.
\end{equation}
If the parameters $\lambda_1,\dots,\lambda_p$ are all equal, then Slope is equal to the Lasso with tuning parameter $\lambda_1$.

Slope presents several advantages compared to Lasso in sparse linear regression.
First, Slope provably controls the False Discovery Rate (FDR) for orthogonal design matrices \cite{bogdan2015slope}
and experiments show that this property is also satisfied for some non-orthogonal design matrices \cite{bogdan2015slope}.
Second, it appears that Slope has more power than Lasso in the sense that Slope will discover more nonzero coefficients of the unknown target vector \cite{bogdan2015slope}. An interpretation of this phenomenon is that Lasso shrinks small coefficients too heavily and may thus miss the smallest nonzero coefficients of the target vector.
On the other hand, the Slope penalty induces less shrinkage on small coefficients, leading to more power.
Third, while Lasso with the universal parameter is known to achieve the rate of estimation of order $(s/n)\log(p)$ (where $s$ is the sparsity of the unknown target vector, $n$ the number of measurements and $p$ the number of covariates), Slope achieves the optimal rate of estimation
of order $(s/n)\log(p/s)$ \cite{su2016slope,bellec2016slope}.

We propose a theoretical and experimental analysis of \GSlope, the
counterpart estimator of Slope for signals defined on graphs. \GSlope is
defined in the next section.
Our theoretical contribution for \GSlope borrows some technical details recently introduced in \cite{hutter2016optimal} to control the Mean Squared Error (MSE) for the Generalized Lasso.

Last but not least, we provide an efficient solver to compute the \GSlope estimator. It relies on accelerated proximal gradient descent to solve the dual formulation \cite{Beck_Teboulle09,Combettes_Pesquet11,Parikh_Boyd_Chu_Peleato_Eckstein13}.
To obtain an efficient solver, we leverage the seminal contribution made in \cite{Zeng_Figueiredo14} showing the link between ordered $\ell_1$ norm \eqref{eq:def-ordered-ell-1-norm} and isotonic regression. Hence, we can use fast implementations of the PAVA algorithm (for Pool Adjacent Violators Algorithm, see for instance \cite{best1990active}), available for instance in \texttt{scikit-learn} \cite{Pedregosa_etal11} for this purpose.
Numerical experiments illustrate the benefit of \GSlope, in particular in terms of True Discovery Rate (TDR) performance.

A high level interpretation of our simulation results is as follows.
In the model considered in this paper, a sharp discontinuity of the signal corresponds to an edge of the graph with nonzero coefficient.
Since Graph-Lasso uses an $\ell_1$-penalty, the penalty level is uniform across all edges of the graph.
Edges with small coefficients are too heavily penalized with Graph-Lasso.
Using \GSlope lets us reduce the penalty level on the edges with small coefficients. This leads to the discovery of more discontinuities of the true signal as compared to Graph-Lasso.

\subsection{Model and notation}
\label{sub:notation}

Let $\G = (\V,\E)$ be an undirected and connected graph on $n$ vertices, $\V = \nint{n}$, and $p$ edges, $\E = \nint{p}$.
This graph can be represented by its edge-vertex incidence matrix $\Inct = \Inc_{\G}^\top \in \RR^{p \times n}$ (we drop the reference to $\G$ when no ambiguity is possible) defined as
\begin{equation}
  (\Inct)_{e,v} =
  \begin{cases}
    + 1, & \text{if } v = \min(i,j) \\
    -1, & \text{if } v = \max(i,j) \\
    0, & \text{otherwise}
  \end{cases}\enspace,
\end{equation}
where $e = \ens{i,j}$.
The matrix $\Lap=\Inc \Inct$ is the so-called graph Laplacian of $\G$. The Laplacian $\Lap$ is invariant under a change of orientation of the graph.

For any $u\in\bbR^p$, we denote by $\norm{u}_0$ the pseudo $\ell_0$ norm of $u$ : $\norm{u}_0=\abs{\{j \in \nint{p}: u_j \neq 0\}}$, and for any matrix $\mathbf{A}$, we denote by $\mathbf{A}^{\dagger}$ its Moore-Penrose pseudo-inverse. The canonical basis of $\bbR^p$ is denoted $(e_1,\dots,e_p)$.

For any norm $\norm{\cdot}$ on $\bbR^n$,
the associated \emph{dual norm} $\norm{\cdot}^*$ reads at $v \in \bbR^n$
\begin{equation}
  \norm{v}^* = \usup{\norm{\beta} \leq 1} \dotp{v}{\beta} \enspace.
\end{equation}
As a consequence, for every $(\beta, v) \in \bbR^n \times \bbR^n$, one has $\dotp{\beta}{v} \leq \norm{\beta} \norm{v}^*$.

In this work, we consider the following denoising problem for a signal over a graph. Assume that each vertex $i\in[n]$ of the graph
carries a signal $\xs_i$. For each vertex $i\in[n]$ of the graph, one observes $y_i$, a noisy perturbation of $\xs_i$.
In vector form, one observes the vector $y\in\bbR^n$ and aims to estimate $\xs\in\bbR^n$, \ie
\begin{equation}\label{eq:model}
  y = \xs + \eps \enspace,
\end{equation}
where $\eps\sim \Nn(0, \sigma^2 \Id_n)$ is a noise vector.
We will say that an edge $e=\{i,j\}$ of the graph carries the signal $(\Inct\xs)_e$.
In particular, if two vertices $i$ and $j$ are neighbours and if they carry the same value of the signal, \ie $\xs_i = \xs_j$, then the corresponding edge $e=\{i,j\}$ carries the constant signal.
The focus of the present paper is on signals $\xs$ that have few discontinuities. A signal $\xs\in\bbR^n$ has few discontinuities if $\Inct\xs$ has few nonzero coefficients, 
\ie $\norm{\Inct\xs}_0$ is small,
or equivalently if most edges of the graph carry the constant signal.
In particular, if $\norm{\Inct\xs}_0 = s$, we say that $\xs$ is a vector of $\Inct$-sparsity $s$.

\subsection{The \GSlope estimator}
\label{sub:estimator}

We consider in this paper the so-called \emph{\GSlope} variational scheme:
\begin{equation}\label{eq:gslope}
  \xe:=\hat{\beta}^{\rm GS}  \in \uargmin{\x \in \bbR^p}
  \frac{1}{2n}\norm{y - \x}^2 + \norms{\Inct \x} \enspace,
\end{equation}
where 
\begin{equation}
  \norms{\Inct \x} = \sum_{j=1}^p \lambda_j |\Inct\x|_j^\downarrow \enspace,
\end{equation}
with $\lambda=(\lambda_1,\dots,\lambda_p)\in\bbR^p$ satisfying $\lambda_1 \geq \lambda_2 \geq \cdots \geq \lambda_p\ge0$, and using for any vector $\theta \in  \bbR^p$ the notation\footnote{following the notation from \cite{Bhatia97}.} $(|\theta|_1^\downarrow,\dots,|\theta|^\downarrow_p)$ for the non-increasing rearrangement of its amplitudes $(|\theta_1|,\dots,|\theta_p|) $.
According to~\cite{bogdan2015slope}, $\norms{\cdot}$ is a norm over $\bbR^p$ if and only if $\lambda_1 \geq \lambda_2 \geq \cdots \geq \lambda_p\ge0$ with at least one strict inequality.
This is a consequence of the observation that if $\lambda_1 \geq \lambda_2 \geq \cdots \geq \lambda_p\ge0$ then one can rewrite the \Slope-norm of $\theta$ as the maximum over all $\tau \in \mathfrak{S}_p$ (the set of permutations over $\nint{p}$), of the quantity $\sum_{i=1}^p \lambda_j |\theta_{\tau(j)}|$:
\begin{equation}
  \norms{\theta} = \umax{\tau \in \mathfrak{S}_p} \sum_{j=1}^p \lambda_j |\theta_{\tau(j)}| = \sum_{j=1}^p \lambda_j |\theta|^\downarrow_{j}\enspace.
\end{equation}

The Generalized Lasso (also sometimes referred to as TV denoiser) relies on $\ell_1$ regularization.
It was recently investigated in \cite{hutter2016optimal,sadhanala2016total}, and can be defined as
\begin{equation}\label{eq:glasso}
   \hat{\beta}^{\rm GL}  \in \uargmin{\x \in \bbR^p}
  \frac{1}{2n}\norm{y - \x}^2 + \lambda_1 \norm{\Inct \x}_1 \enspace,
\end{equation}
where $\norm{\cdot}_1$ is the standard $\ell_1$ norm, and $\lambda_1>0$ is a tuning parameter.

If $\lambda_1 = \lambda_2 = \dots = \lambda_p$ then $\norms{\theta} = \lambda_1 \norm{\theta}_1$ for all $\theta\in\bbR^p$,
so that the minimization problems \eqref{eq:glasso} and \eqref{eq:gslope} are the same.
On the other hand, if $\lambda_j>\lambda_{j+1}$ for some $j=1,\dots,p-1$, then the optimization problems \eqref{eq:glasso} and \eqref{eq:gslope} differ. For instance, if $\lambda_1 > \lambda_2>0$, all coefficients of $\Inct\x$ are equally penalized in the Graph-Lasso \eqref{eq:glasso},
while coefficients of $\Inct\x$ are not uniformly penalized in the \GSlope optimization problem \eqref{eq:gslope}.
Indeed, in the \GSlope optimization problem \eqref{eq:gslope},
the largest coefficient of $\Inct\x$ is penalized as in \eqref{eq:glasso}
but smaller coefficients of $\Inct\x$ receive a smaller penalization.
The \GSlope optimization problem \eqref{eq:gslope} is more flexible than \eqref{eq:glasso} as it allows
the smaller coefficients of $\Inct\x$ to be less penalized than its larger coefficients.
We will see in the next sections that this flexibility brings advantages to both the theoretical properties of $\hat{\beta}^{\rm GS}$
as well as its performance in simulations, as compared to $\hat{\beta}^{\rm GL}$.


\section{Theoretical guarantees: sharp oracle inequality}
\label{sec:theoretical_guarantees}


We can now state the main theoretical result of the paper, a sharp oracle inequality for the \GSlope.
For any integer $s$ and weights $\lambda=(\lambda_1,\dots,\lambda_p)$, define
\begin{equation}\label{eq:def-Lambda}
    \Lambda(\lambda, s) = \Big( \sum_{j=1}^s \lambda_j^2 \Big)^{1/2} .
\end{equation}

\begin{thm}\label{thm:oracle-inequality}
    Assume that the \GSlope weights $\lambda_1\ge\dots\ge\lambda_p\ge0$ are such that the event
    \begin{equation}
        \label{eq:event-dual-norm}
        \frac{1}{n}\norms{\Inc^\dagger \eps }^* \le 1/2
    \end{equation}
    has probability at least $1/2$.
    Then, for any $\delta\in(0,1)$, we have with probability at least $1-2\delta$
    \begin{equation}
        \frac 1 n \norm{\xe - \x^\star}^2
        \le
        \inf_{s\in[p]} \!
        \left[
            \inf_{\substack{\x\in\bbR^n\\\norm{\Inct\x}_0\le s}}
            \frac 1 n \norm{\x - \x^\star}^2
            +
            \frac{1}{2n}
            \left(
                \frac{3n\Lambda(\lambda,s)}{2\kappa(s)} + \frac{\sigma + 2\sigma\sqrt{2\log(1/\delta)}}{\sqrt n}
            \right)^2
        \right]\enspace,
        \label{eq:oracle-ineq}
    \end{equation}
    where $\Lambda(\cdot,\cdot)$ is defined in \eqref{eq:def-Lambda} and the compatibility factor $\kappa(s)$ is defined as
    \begin{equation}\label{eq:def-kappa}
        \kappa(s) \triangleq
        \inf_{v\in\bbR^n:
            3 \Lambda(\lambda,s)\norm{\Inct v}_2
            >
            \sum_{j=s+1}^p \lambda_j |\Inct v|_j^\downarrow
        }
        \left(\frac{\norm{v}}{\norm{\Inct v}_2} \right)\enspace.
    \end{equation}
  \end{thm}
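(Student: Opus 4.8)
The plan is to combine first-order optimality of $\xe$ with the strong convexity of the quadratic data term, then split the stochastic term along $\mathrm{range}(\Inc)$ and its orthogonal complement $\ker\Inct$, and finally reduce the penalty bookkeeping to a sorted inequality for the ordered-$\ell_1$ norm governed by the compatibility factor $\kappa(s)$.

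First I would record the base inequality. Because $\xe$ minimizes a $\tfrac1n$-strongly convex objective, for every competitor $\x$ one has $\frac1{2n}\norm{\xe-\xs}^2 + \frac1{2n}\norm{\xe-\x}^2 \le \frac1{2n}\norm{\x-\xs}^2 + \frac1n\dotp{\eps}{\xe-\x} + \norms{\Inct\x} - \norms{\Inct\xe}$, after substituting $y=\xs+\eps$ and expanding the squares. The surplus term $\frac1{2n}\norm{\xe-\x}^2$ furnished by strong convexity is precisely what will later make the bound sharp, i.e.\ give leading constant $1$ on the oracle term.

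Next, setting $v=\xe-\x$, I would decompose $\eps$ with the orthogonal projector $\Pi=\Inc\Inc^\dagger$ onto $\mathrm{range}(\Inc)$, whose complement $\ker\Inct$ is the line of constants since $\G$ is connected. From $\Pi v=(\Inc^\dagger)^\top\Inct v$ one gets $\dotp{\eps}{\Pi v}=\dotp{\Inc^\dagger\eps}{\Inct v}\le\norms{\Inc^\dagger\eps}^*\,\norms{\Inct v}$, so on the event \eqref{eq:event-dual-norm} the in-range part of $\frac1n\dotp{\eps}{v}$ is at most $\tfrac12\norms{\Inct v}$; to upgrade the probability of that event from $1/2$ to $1-\delta$ I would invoke Gaussian (Borell--TIS) concentration of the Lipschitz map $\eps\mapsto\frac1n\norms{\Inc^\dagger\eps}^*$. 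The remaining part reduces to the empirical mean $\bar\eps=\frac1n\dotp{\eps}{\mathbf 1}\sim\Nn(0,\sigma^2/n)$, whose deviation is controlled with probability $1-\delta$ and supplies the $\sigma/\sqrt n$ scale; intersecting the two events produces the probability $1-2\delta$.

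The core of the proof is the penalty step. Using $\norm{\Inct\x}_0\le s$ together with the sorted inequality for $\norms{\cdot}$, I would bound $\norms{\Inct\x}-\norms{\Inct\xe}\le\Lambda(\lambda,s)\norm{\Inct v}_2-\sum_{j=s+1}^p\lambda_j|\Inct v|_j^\downarrow$, the factor $\Lambda(\lambda,s)$ arising from the Cauchy--Schwarz step $\sum_{j\le s}\lambda_j|\Inct v|_j^\downarrow\le\Lambda(\lambda,s)\norm{\Inct v}_2$. Adding the $\tfrac12\norms{\Inct v}$ from the noise leaves the surplus $\tfrac32\Lambda(\lambda,s)\norm{\Inct v}_2-\tfrac12\sum_{j>s}\lambda_j|\Inct v|_j^\downarrow$, and I would split on the cone defining $\kappa(s)$: outside it the surplus is $\le0$, while inside it $\norm{\Inct v}_2\le\norm v/\kappa(s)$, so the surplus is at most $\frac{3\Lambda(\lambda,s)}{2\kappa(s)}\norm v$. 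Collecting everything, the right-hand side becomes $\frac1{2n}\norm{\x-\xs}^2+H\norm v$ with $H$ gathering $\frac{3\Lambda(\lambda,s)}{2\kappa(s)}$ and the constant-direction noise scale, while the left-hand side still carries $\frac1{2n}\norm v^2$; Young's inequality $H\norm v-\frac1{2n}\norm v^2\le\frac n2H^2$ absorbs the linear term into the quadratic and yields the squared expression, after which I would take the infimum over $\x$ with $\norm{\Inct\x}_0\le s$ and then over $s\in[p]$ to obtain \eqref{eq:oracle-ineq}. The hard part will be this sorted-norm and compatibility step: unlike the $\ell_1$ case the ordered-$\ell_1$ norm is not separable, so the ``top-$s$ coordinates'' splitting must be run through the non-increasing rearrangement of $\Inct v$ and coupled to the analysis operator $\Inct$, and the sharp constants in the final absorption demand careful tracking.
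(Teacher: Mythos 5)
Your overall architecture coincides with the paper's: the strong-convexity basic inequality with the surplus $\tfrac12\norm{\xe-\x}^2$, the sorted-norm bound $\norms{\Inct\x}-\norms{\Inct\xe}\le\Lambda(\lambda,s)\norm{\Inct v}_2-\sum_{j=s+1}^p\lambda_j|\Inct v|_j^\downarrow$ for $v=\xe-\x$, the orthogonal split of $\eps$ along $\ker(\Inct)$ and its complement, the cone dichotomy through $\kappa(s)$, and the final absorption by Young's inequality are all exactly the steps of the paper (your $\Pi$ is the paper's $\Id_n-\Pi$, a harmless notational flip), and your treatment of the constant direction via $\sqrt n\,\bar\eps$ is equivalent to the paper's $\chi^2_1$ bound on the kernel projection.

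The genuine gap is in the stochastic step for the range component. You propose to ``upgrade the probability of the event \eqref{eq:event-dual-norm} from $1/2$ to $1-\delta$'' by applying Borell--TIS to the map $\eps\mapsto\frac1n\norms{\Inc^\dagger\eps}^*$. Concentration cannot raise the probability of a fixed event; it only yields $\frac1n\norms{\Inc^\dagger\eps}^*\le\frac12+\sigma L_0\sqrt{2\log(1/\delta)}$ with probability $1-\delta$, where $L_0=\frac1n\sup_{\norm{u}_2\le1}\norms{\Inc^\dagger u}^*$ is the Lipschitz constant of that map. This $L_0$ is not controlled by any hypothesis of the theorem, and the resulting extra contribution $n\sigma L_0\sqrt{2\log(1/\delta)}\,\norms{\Inct v}$ multiplies $\norms{\Inct v}$ rather than $\norm{v}$, so it cannot be absorbed into the squared term of \eqref{eq:oracle-ineq} and would change the constants (and introduce a graph-dependent operator norm absent from the statement). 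The paper's resolution is to apply Gaussian concentration not to the dual-norm statistic but to the $1$-Lipschitz supremum functional $g(\eps)=\sup_{\norm{v}=1}\bigl[\eps^\top(\Id_n-\Pi)v+n\Lambda(\lambda,s)\norm{\Inct v}_2-n\sum_{j=s+1}^p\lambda_j|\Inct v|_j^\downarrow\bigr]$ (Lipschitz because $\norm{(\Id_n-\Pi)v}\le1$): the probability-$1/2$ hypothesis is used \emph{only} to show $\Med[g(\eps)]\le(3n/2)\Lambda(\lambda,s)/\kappa(s)$, and concentration around the median then contributes the dimension-free deviation $\sigma\sqrt{2\log(1/\delta)}$, which together with the kernel term gives exactly $\sigma+2\sigma\sqrt{2\log(1/\delta)}$. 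Restructure your stochastic step around this median argument; the rest of your proof then goes through as written.
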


\begin{proof}
    Let $\x$ be a minimizer of the right hand side of \eqref{eq:oracle-ineq} and let $s=\norm{\Inct\x}_0$.
    Define the function $f(\cdot)$ by
    \begin{equation}
        f(e) = \sup_{v\in\bbR^n: \norm{v} = 1}
        \left[
            e^\top v
            + n \Lambda(\lambda,s)\norm{\Inct v}_2
            -n \sum_{j=s+1}^p \lambda_j |\Inct v|_j^\downarrow
        \right].
    \end{equation}
    Let also $w = (\xe - \x)/(\norm{\xe - \x})$.
    By \Cref{lem:strong-convexity} and \Cref{lem:algebra-slope-norm} with $\alpha = 0$,
    for any $z \in \bbR^p$, we have
    \begin{align}
        \frac 1 2 (\norm{\xe - &z}^2
        - \norm{\x - z}^2
        + \norm{\xe - \x}^2
        )\\
        \le&
        \eps^\top(\xe - \x) + n \norms{\Inct \x} - n \norms{\Inct \xe}, \\
        \le&
        \eps^\top(\xe - \x) + n \Lambda(\lambda,s)\norm{\Inct (\xe-\x )}_2
        - n \sum_{j=s+1}^p \lambda_j |\Inct (\xe - \x)|_j^\downarrow, \\
        =&
        \norm{\xe - \x}\Big(
            \eps^\top w + n\Lambda(\lambda,s)\norm{\Inct w}_2
            -n \sum_{j=s+1}^p \lambda_j |\Inct w|_j^\downarrow
        \Big)
        , \\
        \le &
        \norm{\xe - \x} f(\eps)
        \le \frac 1 2 f(\eps)^2 + \frac 1 2 \norm{\xe - \x}^2,
    \end{align}
    where for the last inequality we used the elementary inequality $2ab\le a^2+b^2$.

    By \Cref{lem:Pi} we have $\Id_n = (\Id_n - \Pi) + \Pi$
    with $\Id_n - \Pi = (\Inct )^\dagger \Inct $ and
    where $\Pi$ is the orthogonal projection onto $\ker(\Inct )$.
    Furthermore, $\ker(\Inct )$ has dimension 1 so that $\norm{\Pi\eps}_2^2/\sigma^2$
    is a $\chi^2$ random variable with 1 degree of freedom.
    Thus for any $v\in\bbR^n$ with $\norm{v}=1$ we have
    \begin{equation}
        \eps^\top v
        =
        \eps^\top\Pi v
        +\eps^\top(\Id_n - \Pi) v
        \le
        \norm{\Pi\eps}_2
        +\eps^\top(\Id_n - \Pi) v.
        \label{eq:after-cs}
    \end{equation}
    Let us define the function $g(\cdot)$ by
    \begin{equation}
        g(e) = \sup_{v\in\bbR^n: \norm{v} = 1}
        \left[
            e^\top(\Id_n - \Pi) v
            + n \Lambda(\lambda,s)\norm{\Inct v}_2
            -n \sum_{j=s+1}^p \lambda_j |\Inct v|_j^\downarrow
        \right].
    \end{equation}
    Then, by the definition of $f,g$ and \eqref{eq:after-cs}, we have almost surely $f(\eps) \le \norm{\Pi\eps}_2 + g(\eps)$.
    By a standard bound on $\chi^2$ random variable with 1 degree of freedom, we have
    $\bbP(\norm{\Pi\eps}_2 \le \sigma+ \sigma\sqrt{2\log(1/\delta)})\ge 1-\delta$.
    Furthermore,
    the function $g$ is is 1-Lipschitz and $\eps\sim \mathcal{N}(0,\sigma^2\Id_n)$. By
    the Gaussian concentration theorem \cite[Theorem 10.17]{boucheron2013concentration}, we have
    \begin{equation}
        \bbP \left( g(\eps) \le \Med[g(\eps)] + \sigma\sqrt{2\log(1/\delta)} \right) \ge 1-\delta \enspace,
    \end{equation}
    where $\Med[g(\eps)]$ is the median of the random variable $g(\eps)$.
    Combining these two probability bounds with the union bound, we obtain
    $f(\eps) \le \Med[g(\eps)] + \sigma + 2\sigma\sqrt{2\log(1/\delta)}$
    with
    probability at least $1-2\delta$.

    To complete the proof, it remains to show that
    \begin{equation}
        \Med[g(\eps)]\le (3n/2) \Lambda(\lambda,s) / \kappa(s) \enspace.
        \label{eq:goal-median-1}
    \end{equation}
    By definition of the median, it is enough to show that
    \begin{equation}
        \bbP(g(\eps)\le (3n/2)\Lambda(\lambda,s)/\kappa(s))\ge 1/2.
        \label{eq:goal-median-2}
    \end{equation}
    By \Cref{lem:Pi} and the fact that
    $\Id_n - \Pi = (\Inct )^\dagger \Inct $, we obtain that for all $v$,
    \begin{align}
        \eps^\top(\Id_n - \Pi) v
        &=
        \eps^\top (\Inct )^\dagger \Inct v\\
        &\le
        \norms{((\Inct )^\dagger)^\top \eps }^*
        \norms{\Inct  v}
        =
        \norms{\Inc^\dagger \eps }^*
        \norms{\Inct  v}\enspace,
    \end{align}
    where we used
    the duality between $\norms{\cdot}^*$ and $\norms{\cdot}$
    for the second term and the fact that the transpose and the Moore-Penrose
    pseudo-inverse commute, which implies
    $(\Inc^\dagger)^\top  = \Inct^\dagger$.

    We now bound $g(\eps)$ from above on the event \eqref{eq:event-dual-norm}.
    On the event \eqref{eq:event-dual-norm},
    \begin{align}
        g(\eps)
        &\le
        \sup_{v\in\bbR^n: \norm{v} =1}
        \left[
            \frac n 2 \norms{\Inct  v}
            + n \Lambda(\lambda,s)\norm{\Inct  v}_2
            -n \sum_{j=s+1}^p \lambda_j |\Inct v|_j^\downarrow
        \right] \\
        &\le
        \sup_{v\in\bbR^n: \norm{v} =1}
        \left[
            \frac n 2 \sum_{j=1}^s \lambda_j |\Inct v|_j^\downarrow
            + n \Lambda(\lambda,s)\norm{\Inct v}_2
            - \frac n 2 \sum_{j=s+1}^p \lambda_j |\Inct v|_j^\downarrow
        \right] \\
        &\le
        \frac n 2
        \sup_{v\in \bbR^n: \norm{v} =1}
        \left[
            3 \Lambda(\lambda,s)\norm{\Inct v}_2
            - \sum_{j=s+1}^p \lambda_j |\Inct v|_j^\downarrow
        \right] \enspace.
    \end{align}
    Consider $v\in\bbR^n$ such that $\norm{v} = 1$ and
    $3 \Lambda(\lambda,s)\norm{\Inct v}_2 >\sum_{j=s+1}^p \lambda_j |\Inct v|_j^\downarrow$.
    Then, by the definition of $\kappa(s)$ given in defined in~\eqref{eq:def-kappa}
    we have
    \begin{equation}
            3 \Lambda(\lambda,s)\norm{\Inct v}_2
            - \sum_{j=s+1}^p \lambda_j |\Inct v|_j^\downarrow
            \le
            3 \Lambda(\lambda,s) \norm{v} /\kappa(s)
            =
            3 \Lambda(\lambda,s) / \kappa(s).
    \end{equation}
    Consider $v\in\bbR^n$ such that $\norm{v} = 1$ and
    $3 \Lambda(\lambda,s)\norm{\Inct v}_2 \le \sum_{j=s+1}^p \lambda_j|\Inct v|_j^\downarrow$,
    then we have trivially
    \begin{equation}
            3 \Lambda(\lambda,s)\norm{\Inct v}_2
            - \sum_{j=s+1}^p \lambda_j |\Inct v|_j^\downarrow
            \le
            0
            \le
            3 \Lambda(\lambda,s) / \kappa(s)\enspace.
    \end{equation}
    Thus, we have proved that on the event \eqref{eq:event-dual-norm} that has probability at least $1/2$, 
        \eqref{eq:goal-median-2} holds.
        This implies \eqref{eq:goal-median-1} by definition of the median.
\end{proof}
The constant $\kappa(s)$ is sometimes referred to as the compatibility factor of $\Inct$.
Bounds on the compatibility factor are obtained for a large class of random and deterministic graphs \cite{hutter2016optimal}.
For instance, for graphs with bounded degree, the compatibility factor is bounded from below (see for instance \cite[Lemma~3]{hutter2016optimal}).
In linear regression, constants that measure the correlations of the design matrix have been proposed to study the Lasso and the Dantzig selector:
\cite{bickel2009simultaneous} defined the Restricted Eigenvalue constant,
\cite{vandegeer2009conditions} defined the Compatibility constant,
\cite{ye2010rate} defined the Cone Invertibility factors and
\cite{Dalalyan_Hebiri_Lederer17} defined the Compatibility factor, to name a few.
The Weighted Restricted Eigenvalue constant was also defined in \cite{bellec2016slope} to study the Slope estimator.
These constants are the linear regression analogs of $\kappa(s)$ defined in \eqref{eq:def-kappa}.

\Cref{thm:oracle-inequality} does not provide an explicit choice for the weights $\lambda_1\ge\dots\ge\lambda_p$.
These weights should be large enough so that the event \eqref{eq:event-dual-norm} has probability at least $1/2$.
These weights should also be as small as possible in order to minimize the right hand side of \eqref{eq:oracle-ineq}.
Define $g_1,\dots,g_p$ by
\begin{equation}
\label{eq:def-g_j}
g_j = e_j^\top \Inc^\dagger \eps, \qquad \text{ for all  }j=1,\dots,p \enspace.
\end{equation}
and let $|g|_1^\downarrow\ge\dots\ge |g|_p^\downarrow$ be a nondecreasing rearrangement of $(|g_1|,\dots,|g_p|)$.
Inequality \eqref{ineq:dual-to-max} below reads
\begin{equation}
    (1/n)\norms{\Inc^\dagger \eps }^*
    \le
    \max_{j=1,\dots,p} \left( |g|_j^\downarrow \; / \; (n\lambda_j) \right)\enspace.
\end{equation}
Thus, if the event
\begin{equation}
    \label{eq:event-max}
    \max_{j=1,\dots,p} \left( |g|_j^\downarrow \; / \; (n \lambda_j) \right) \le 1/2\enspace,
\end{equation}
has probability greater than $1/2$,
then the event \eqref{eq:event-dual-norm} has probability greater than $1/2$ as well, and the conclusion of \Cref{thm:oracle-inequality} holds.
This observation can be used to justify the following heuristics for the choice of the tuning parameters $\lambda_1\ge\dots\ge\lambda_p$.
This heuristics can be implemented provided that the Moore-Penrose pseudo-inverse $\Inc^\dagger$
and the probability distribution of the noise random vector $\eps$ are both known.
These heuristics go as follows.
Assume that one has generated $N$ independent copies of the random vector $\eps$,
and denote by $\mathbb{\tilde P}_N$ the empirical probability distribution with respect to these independent copies of $\eps$,
and $\tilde F_N^j$ the empirical cumulative distribution function (cdf) of $|g|_j^\downarrow$.
Next, define $\lambda_j$ as the $(1-1/(3p))$th quantile of $2 |g|_j^\downarrow$, so that
$$\mathbb{\tilde P}_N( 2 |g|_j^\downarrow\le n \lambda_j ) = 1-1/3p\enspace.$$
As $N\to +\infty$, by the Glivenko-Cantelli theorem, $\tilde F_N^j(t)$ converges to the cdf of $|g|_j^\downarrow$ at $t$
uniformly in $t\in\bbR, j\in [p]$.
Hence if $N$ is large enough, then for all $j=1,...,p$ we have
$\mathbb P( 2 |g|_j^\downarrow\le n \lambda_j ) \ge 1-1/2p\enspace.$
By the union bound over $j=1,\dots,p$,
$$
\mathbb{P} \left[ \max_{j=1,\dots,p}  \left(    |g|_j^\downarrow \; / \; (n\lambda_j) \right) \le 1/2 \right] \ge 1/2 \enspace,
$$
thus the event \eqref{eq:event-dual-norm} has probability greater than $1/2$ with respect to the probability distribution $\mathbb P$ of $\eps$.
This simple scheme provides a computational heuristic to choose the weights $\lambda_1,\dots,\lambda_p$.

The following corollaries propose a theoretical choice for the weights.
To state these corollaries, let us write
$$\rho(\G) =  \max_{j\in[p]} \norm{(\Inct )^\dagger e_j}\enspace,$$
following the notation in \cite{hutter2016optimal}.

\begin{cor}\label{cor:rho}
    Assume that the \GSlope weights $\lambda_1\ge\dots\ge\lambda_p\ge0$ satisfy for any $j\in [p]$
    \begin{equation}\label{eq:def_lambdas}
        n \lambda_j \ge 8 \sigma \rho(\G)\sqrt{\log(2p/j)} \enspace.
    \end{equation}
    Then, for any $\delta\in(0,1)$, the oracle inequality \eqref{eq:oracle-ineq}
    holds with probability at least $1-2\delta$.
\end{cor}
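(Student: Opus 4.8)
The plan is to reduce \Cref{cor:rho} to a probabilistic statement about the coordinates $g_1,\dots,g_p$ of \eqref{eq:def-g_j}, and then to control their order statistics by a peeling argument. By \Cref{thm:oracle-inequality} it is enough to check that the choice \eqref{eq:def_lambdas} makes the event \eqref{eq:event-dual-norm} hold with probability at least $1/2$. The bound \eqref{ineq:dual-to-max}, which controls $(1/n)\norms{\Inc^\dagger\eps}^*$ by $\max_{j}\big(|g|_j^\downarrow/(n\lambda_j)\big)$, shows that this follows once the event \eqref{eq:event-max} has probability at least $1/2$. Since \eqref{eq:def_lambdas} gives $n\lambda_j\ge 8\sigma\rho(\G)\sqrt{\log(2p/j)}$ and $\log(2p/j)>0$ for every $j\in[p]$, the event \eqref{eq:event-max} is in turn implied by the simultaneous bounds $|g|_j^\downarrow\le 4\sigma\rho(\G)\sqrt{\log(2p/j)}$, $j\in[p]$. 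So the whole corollary reduces to showing that these simultaneous bounds hold with probability at least $1/2$.

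First I would record the Gaussian structure of the $g_j$. Writing $g_j=e_j^\top\Inc^\dagger\eps=\big((\Inct)^\dagger e_j\big)^\top\eps$ via $(\Inc^\dagger)^\top=(\Inct)^\dagger$, each $g_j$ is centered Gaussian with variance $\sigma^2\norm{(\Inct)^\dagger e_j}^2\le\sigma^2\rho(\G)^2$ by the definition of $\rho(\G)$. Hence every coordinate obeys the uniform subgaussian tail $\bbP(|g_j|>t)\le\exp\!\big(-t^2/(2\sigma^2\rho(\G)^2)\big)$ for $t\ge 0$. The genuine difficulty is that the $g_j$ are \emph{dependent}: their joint covariance $\sigma^2(\Inct)^\dagger\big((\Inct)^\dagger\big)^\top$ is far from diagonal, so I cannot analyse the $|g|_j^\downarrow$ as order statistics of independent variables.

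The heart of the proof, and the main obstacle, is therefore the \emph{simultaneous} control of all ordered values for dependent Gaussians. A naive union bound $\sum_j\bbP\big(|g|_j^\downarrow>t_j\big)$ with $t_j=4\sigma\rho(\G)\sqrt{\log(2p/j)}$ is useless: its terms behave like $(j/2p)^{7}$, whose sum grows with $p$. Instead I would pass to the exceedance count $N(s)=\#\{i:|g_i|>s\}$, for which dependence is harmless because $\bbP\big(N(s)\ge k\big)\le\frac1k\sum_{i=1}^p\bbP(|g_i|>s)\le\frac{p}{k}\exp\!\big(-s^2/(2\sigma^2\rho(\G)^2)\big)$ by Markov's inequality and linearity of the sum. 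Since $|g|_j^\downarrow>t_j$ is exactly $N(t_j)\ge j$ and $t_j$ is decreasing in $j$, a geometric peeling of the index range gives, for $j$ in a block $[r^m,r^{m+1})$, the inclusion $\{N(t_j)\ge j\}\subseteq\{N(t_{r^{m+1}})\ge r^m\}$. Applying this Markov bound within each block and summing over $m=0,\dots,\lfloor\log_r p\rfloor$ produces a geometric series whose total is governed by the exponent coming from the weights; the constant $8$ in \eqref{eq:def_lambdas} is calibrated, together with a sufficiently fine grid $r$, so that this series is at most $1/2$. Tracking these constants through the peeling is the delicate part, after which the reduction of the first paragraph closes the argument.
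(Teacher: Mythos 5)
Your proposal is correct, and its skeleton coincides with the paper's: reduce \Cref{cor:rho} to showing that \eqref{eq:event-dual-norm} has probability at least $1/2$, pass from the dual norm to $\max_j |g|_j^\downarrow/(n\lambda_j)$ via \eqref{ineq:dual-to-max}, and record that each $g_j$ is centered Gaussian with variance at most $\sigma^2\rho(\G)^2$. The difference is the last step: the paper invokes \Cref{lem:stochastic} (Proposition E.2 of \cite{bellec2016slope}) as a black box to obtain $\bbP\big(\max_j |g|_j^\downarrow/\sqrt{V\log(2p/j)}\le 4\big)\ge 1/2$, whereas you re-prove this order-statistics bound from scratch via the exceedance count $N(s)$, Markov's inequality applied to its mean (which is indeed the right way to neutralize the dependence among the $g_j$), and geometric peeling of the index range; you also correctly diagnose that the naive union bound over all $j$ diverges, which is why the peeling is genuinely needed. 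The one point you leave open --- ``tracking these constants through the peeling'' --- deserves a warning: with blocks of ratio $r=2$ and the crude tail $\bbP(|g_i|>t)\le \exp\big(-t^2/(2\sigma^2\rho(\G)^2)\big)$, the resulting series is bounded by $r^{15}/\big(2^8(r^7-1)\big)=128/127$, which exceeds $1/2$, so the most obvious instantiation does \emph{not} close. It does close if you either use the sharper Gaussian tail $\bbP(|Z|>t)\le\sqrt{2/\pi}\,e^{-t^2/2}/t$ (contributing an extra factor of at most $\sqrt{2/\pi}/(4\sqrt{\log 2})\approx 0.24$) or take a finer grid (e.g.\ $r^7=15/8$ gives a total below $0.02$). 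With either fix your argument is complete and yields a self-contained proof of the corollary, at the price of re-deriving the paper's imported lemma.
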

Note that if $\lambda_1=\dots=\lambda_p=\lambda$, then the event
\eqref{eq:event-dual-norm} reduces to $\|(\Inct )^\dagger \eps \|_\infty \le n\lambda /2$.
The random variable $\|(\Inct )^\dagger \eps \|_\infty$ is the maximum
of $p$ correlated Gaussian random variables with variance at most $\sigma^2\rho(\G)^2$, so that \eqref{eq:event-dual-norm} has probability at least $1/2$ provided that $\lambda$ is of order $(\rho(\G)\sigma / n)\sqrt{\log p}$.

\begin{proof}[Proof of \Cref{cor:rho}]
    It is enough to show that if the weights $\lambda_1,\dots,\lambda_p$ satisfy \eqref{eq:def_lambdas}
    then the event \eqref{eq:event-dual-norm} has probability at least $1/2$.

    Define $g_1,\dots,g_p$ by \eqref{eq:def-g_j}
    and let $|g|_1^\downarrow\ge\dots\ge |g|_p^\downarrow$ be a nondecreasing rearrangement of $(|g_1|,\dots,|g_p|)$.
    For each $j$, the random variable $g_j$ is a centered Gaussian random variable with variance at most
    $\sigma^2  \norm{\Inc^\dagger e_j}^2 \le \sigma^2 \rho(\mathcal G)^2$.
    By definition of the dual norm, and \cite[Corrolary II.4.3]{Bhatia97}, we have
    \begin{align}
        \frac{1}{n}\norms{\Inc^\dagger \eps }^*
        =&
        \sup_{a\in\bbR^p: \norms{a} = 1} \frac{a^\top \Inc^\dagger \eps}{n}\\
        &
        \le
        \sup_{a\in\bbR^p: \norms{a} = 1}
        \sum_{j=1}^p
        \lambda_j|a|_j^\downarrow \cdot \frac{|g|_j^\downarrow}{n \lambda_j}\\
        &\le
        \max_{j=1,\dots,p} \frac{|g|_j^\downarrow}{n \lambda_j } \label{ineq:dual-to-max}
        \\ & \le \max_{j=1,\dots,p} \frac{|g|_j^\downarrow}{8 \sqrt{V \log(2p/j)} } \enspace,
    \end{align}
    where $V = \sigma^2\max_{j=1,\dots,p}\norm{\Inct^\dagger e_j}^2$.
    Thus, by \Cref{lem:stochastic} below, the event \eqref{eq:event-dual-norm}
    has probability at least $1/2$.
\end{proof}
Under an explicit choice of tuning parameters, \Cref{cor:rho} yields the following result.

\begin{cor}\label{cor:oracle-inequality}
Under the same hypothesis as \Cref{thm:oracle-inequality} but with the special choice $n \lambda_j = 8 \sigma \rho(\G) \sqrt{\log(2p/j)}$ for any $j\in [p]$, then
for any $\delta\in(0,1)$, we have with probability at least $1-2\delta$
    \begin{align}
        \frac 1 n \norm{\xe - \x^\star}^2
        \le \!\!\!
            \inf_{\substack{s\in[p], \x\in\bbR^n\\\norm{\Inct\x}_0\le s}}
            \left[
            \frac 1 n \norm{\x - \x^\star}^2
            \!+\!
            \tfrac{\sigma^2}{n}
                \frac{48 \rho^2(\G) s}{\kappa^2(s)} \log\left(\tfrac{2 e p}{s}\right)
            \right]
            \!+\! \tfrac{\sigma^2}{n}(2 + 16 \log\left(\tfrac{1}{\delta}\right))
        \enspace.
        \label{eq:oracle-ineq_simplified}
    \end{align}
\end{cor}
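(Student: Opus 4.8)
The plan is to read \Cref{cor:oracle-inequality} as a deterministic simplification of \Cref{thm:oracle-inequality} once the standing hypothesis has been checked for the explicit weights. First I would invoke \Cref{cor:rho}: the choice $n\lambda_j = 8\sigma\rho(\G)\sqrt{\log(2p/j)}$ meets condition \eqref{eq:def_lambdas} with equality, so the event \eqref{eq:event-dual-norm} has probability at least $1/2$ and the conclusion of \Cref{thm:oracle-inequality} applies; in particular \eqref{eq:oracle-ineq} holds with probability at least $1-2\delta$. From this point on no probabilistic argument remains and everything is algebra on the right-hand side of \eqref{eq:oracle-ineq}.

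The heart of the argument is to make $\Lambda(\lambda,s)$ explicit. Substituting the weights into \eqref{eq:def-Lambda} gives
\begin{equation}
  \Lambda(\lambda,s)^2 = \sum_{j=1}^s \lambda_j^2 = \frac{64\sigma^2\rho(\G)^2}{n^2}\sum_{j=1}^s \log(2p/j)\enspace.
\end{equation}
The key step is the combinatorial bound $\sum_{j=1}^s \log(2p/j) \le s\log(2ep/s)$, which I would prove by writing the sum as $s\log(2p) - \log(s!)$ and applying the Stirling-type lower bound $\log(s!)\ge s\log s - s$. This collapses the sum of logarithms into the single factor $\log(2ep/s)$ appearing in \eqref{eq:oracle-ineq_simplified} and yields
\begin{equation}
  \Lambda(\lambda,s)^2 \le \frac{64\sigma^2\rho(\G)^2\, s}{n^2}\log(2ep/s)\enspace.
\end{equation}
I expect this to be the \emph{main obstacle}, since it is where the sharp $\log(2ep/s)$ rate (improving on the $\log p$ rate of the Graph-Lasso) is produced and where the leading numerical constant must be tracked carefully.

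With this bound in hand the rest is bookkeeping on the squared term of \eqref{eq:oracle-ineq}. I would decouple the $s$-dependent part from the $\delta$-dependent part by the elementary inequality $(a+b)^2 \le 2a^2 + 2b^2$, so that the cross term contaminates neither the oracle term nor the deviation term. The first piece, built from $\tfrac{3n\Lambda(\lambda,s)}{2\kappa(s)}$, becomes a constant multiple of $\tfrac{\sigma^2}{n}\tfrac{\rho(\G)^2 s}{\kappa^2(s)}\log(2ep/s)$ after inserting the $\Lambda$ bound; the second piece is controlled through $(1 + 2\sqrt{2\log(1/\delta)})^2 \le 2 + 16\log(1/\delta)$ (again $(a+b)^2\le 2a^2+2b^2$ with $a=1$), producing the additive term $\tfrac{\sigma^2}{n}(2 + 16\log(1/\delta))$.

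Finally, since the resulting bound is valid for every integer $s$ and every $\x$ with $\norm{\Inct\x}_0 \le s$, and since only the first (oracle) summand and the $\log(2ep/s)/\kappa^2(s)$ factor depend on $s$, I would push the infimum over $s$ inside and merge it with the infimum over $\x$ to obtain the single joint infimum displayed in \eqref{eq:oracle-ineq_simplified}. The only delicate point in this last stage is ensuring the multiplicative constants coming from the square expansion and from the bound on $\Lambda(\lambda,s)^2$ collapse exactly to the stated values $48$ and $16$.
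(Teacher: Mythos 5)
Your route is the same as the paper's: the paper's entire proof of \Cref{cor:oracle-inequality} is the one-line invocation of \Cref{lem:control_lambdas} with $C=8\sigma\rho(\G)/n$, and your plan fills in exactly the steps it leaves implicit --- \Cref{cor:rho} to certify that the event \eqref{eq:event-dual-norm} has probability at least $1/2$, the Stirling-type bound $\sum_{j=1}^s\log(2p/j)\le s\log(2ep/s)$ (which is precisely the content of \Cref{lem:control_lambdas}), and the split $(a+b)^2\le 2a^2+2b^2$ to separate the oracle term from the deviation term.

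The one point you deferred, namely that the constants ``collapse exactly to the stated values $48$ and $16$'', does not in fact come out as stated. With your decomposition the $s$-dependent term is
\[
\frac{2}{2n}\left(\frac{3n\Lambda(\lambda,s)}{2\kappa(s)}\right)^2
=\frac{9n\,\Lambda(\lambda,s)^2}{4\kappa(s)^2}
\le \frac{9n}{4\kappa(s)^2}\cdot\frac{64\sigma^2\rho(\G)^2}{n^2}\,s\log\!\left(\frac{2ep}{s}\right)
=\frac{144\,\sigma^2\rho(\G)^2\,s}{n\,\kappa(s)^2}\log\!\left(\frac{2ep}{s}\right),
\]
so the constant produced is $144$, not $48$; even keeping the full square and discarding the cross term (no factor $2$) one only gets $72$. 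The deviation term works out fine (indeed you obtain $\sigma^2(2+16\log(1/\delta))/n^2$, which is stronger than the stated $\sigma^2(2+16\log(1/\delta))/n$). What your argument actually establishes is therefore \eqref{eq:oracle-ineq_simplified} with $48$ replaced by $144$. This appears to be a discrepancy in the paper's stated constant rather than a flaw in your method --- the paper's one-line proof cannot yield $48$ by this route either --- but as written your proposal does not prove the inequality with the constant $48$: you should either carry out the constant tracking and state $144$, or exhibit a genuinely different argument that achieves $48$.
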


\begin{proof}
We apply \Cref{lem:control_lambdas} with the choice $C=8 \sigma \rho(\G) / n$
\end{proof}

When the true signal satisfies $\norm{\Inct\x^\star}_0=s^\star$, the previous bound reduces to

    \begin{align}
        \frac 1 n \norm{\xe - \x^\star}^2
        \le
        \frac{\sigma^2}{n}\left(
            \frac{48 \rho(\G)^2 s^\star}{\kappa(s^\star)^2} \log\left(\frac{2 e p}{s^\star}\right) + 2 + 16 \log\left(\frac{1}{\delta}\right)
        \right)
        \enspace.
    \end{align}
\Cref{cor:oracle-inequality} is an improvement w.r.t. the bound provided in \cite[Theorem 2]{hutter2016optimal} for the TV denoiser (also sometimes referred to as the Generalized Lasso) relying on $\ell_1$ regularization defined in Eq.~\eqref{eq:glasso}.

Indeed, the contribution of the second term in \Cref{cor:oracle-inequality}  is reduced from $\log({e p}/{\delta})$ (in \cite[Theorem 2]{hutter2016optimal}) to $\log({2 e p}/{s})$.
Thus the dependence of the right hand side of the oracle inequality in the confidence level $\delta$ is significantly reduced compared to the result of \cite[Theorem 2]{hutter2016optimal}.

A similar bound as in \Cref{cor:oracle-inequality} could be obtained for $\ell_1$ regularization adapting the proof from \cite[Theorem 4.3]{bellec2016slope}. However such a better bound would be obtained for a choice of regularization parameter relying on the $\Inct$-sparsity of the signal. The \GSlope does not rely on such a quantity, and thus \GSlope is adaptive to the unknown $\Inct$-sparsity of the signal.

\begin{rem}
    The optimal theoretical choice of parameter requires the knowledge of the noise level $\sigma$ from the practitioner. Whenever the noise level $\sigma$ is not known, the practitioner can use the corresponding Concomitant estimator to alleviate this issue \cite{Owen07,Belloni_Chernozhukov_Wang11,Sun_Zhang12}, see also \cite{Ndiaye_Fercoq_Gramfort_Leclere_Salmon16} for efficient algorithms to compute such scale-free estimators.
\end{rem}


\section{Numerical experiments}
\label{sec:experiments}

\subsection{Algorithm for \GSlope}

In this section, we propose an algorithm to compute a solution of the highly structured optimization problem \eqref{eq:gslope}.
The data fidelity term $f:\beta \mapsto \norm{y - \x}_2^2/2$ is a convex smooth function with $1$-Lipschitz gradient, and the map $\x \mapsto \norms{\Inct \x}$ is the pre-composition by a linear operator of the norm $\norms{\cdot}$ whose proximal operator can be easily computed~\cite{Zeng_Figueiredo14,
bogdan2015slope}.
Thus, the use of a dual or primal-dual proximal scheme can be advocated.

Problem~\eqref{eq:gslope} can be rewritten as
\begin{equation}
  \umin{\x \in \RR^n} f(\x) + g(\Inct \x) \enspace,
\end{equation}
where $f$ is a smooth, $1$-Lipschitz strictly convex function and $g=\norms{\cdot}$ is a convex, proper, lower semicontinuous function (see for instance \cite[p.~275]{Bauschke_Combettes11}).
Its dual problem reads
\begin{equation}
  \umin{\theta \in \RR^p} f^\star(\Inc \theta) + g^\star(-\theta) \enspace,
\end{equation}
where $f^\star$ is the convex conjugate of $f$, \ie for any $x\in\bbR^n$
\begin{equation}
  f^\star(x) = \sup_z \dotp{x}{z} - f(z) \enspace.
\end{equation}
Classical computations leads to the following dual problem
\begin{equation}\label{eq:gslope-dual}
  \umin{\theta \in \RR^p} \frac{1}{2} \norm{\Inc \theta - y}_2^2 - \frac{1}{2} \norm{y}_2^2
  \qsubjq
  \norms{\theta}^* \leq 1 \enspace.
\end{equation}
The dual formulation~\eqref{eq:gslope-dual} can be rewritten as an unconstrained problem, using for any set $\mathcal{C}\subset\bbR^n$, and any $\theta\in\bbR^n$, the notation
$$\iota_{\mathcal{C}}(\theta)=\begin{cases}
0, &\text{ if } \theta \in \mathcal{C}\\
+\infty, &\text{ otherwise}
\end{cases}.
$$

The quadratic term in $y$ is constant and can be dropped.
Thus the optimization problem \eqref{eq:gslope-dual} is equivalent to
\begin{equation}\label{eq:gslope-dual-unc}
  \umin{\theta \in \RR^p} \frac{1}{2} \norm{\Inc \theta - y}_2^2 + \iota_{\{\norms{\cdot}^* \leq 1\}}(\theta) .
\end{equation}
The formulation in ~\eqref{eq:gslope-dual-unc} is now well suited to apply an accelerated version of the forward-backward algorithm such as FISTA~\cite{Beck_Teboulle09}.
As a stopping criterion, we use a duality gap criterion: $\Delta(\x, \theta)\leq \epsilon$, where
\begin{equation}
  \Delta(\x, \theta) = \frac{1}{2}\norm{y - \x}_2^2 + \norms{\Inct \x} + \frac{1}{2} \norm{\Inc \theta - y}_2^2 - \frac{1}{2} \norm{y}_2^2 \enspace,
\end{equation}
 for a feasible pair $(\x, \theta)$ and by
$\Delta(\x, \theta) = +\infty$ for an unfeasible pair. In practice we set $\epsilon=10^{-2}$ as a default value.
\Cref{alg:fista} summarizes the dual FISTA algorithm applied to the \GSlope minimization problem.

\begin{algorithm}[t]
  \caption{FISTA on dual formulation}\label{alg:fista}
  \begin{algorithmic}
    \Require $(\x^0, \theta^0)$ initial guess, $L = \norm{\Inc}^2$, $t_0 = 1$, $\epsilon$ duality gap tolerance
    \State $k\gets0$
    \While{$\Delta(\x^k, \theta^k) > \epsilon$}
    \State $\theta^{k+1} \gets \Proj_{\frac{1}{L}B_*}  \left(\bar \theta^{k} - \frac{1}{L} (\Inct(\Inc \bar \theta^{k} - y))\right)$ \Comment{forward-backward step}
    \State $\x^{k+1} \gets y - \Inc \theta^{k+1}$ \Comment{necessary to compute $\Delta$}
    \State $t_{k+1} \gets \frac{1 + \sqrt{1 + 4t_k^2}}{2}$ \Comment{FISTA rule}
    \State $\bar \theta^{k+1} \gets \theta^{k+1} + \frac{t_k-1}{t_{k+1}} (\theta^{k+1} - \theta^k)$ \Comment{non-convex over-relaxation}
    \State $k \gets k + 1$
    \EndWhile
  \end{algorithmic}
\end{algorithm}

We recall that the proximity operator of a convex, proper, lower semicontinuous function $f$ is given as the unique solution of the optimization problem
\begin{equation}
  \Prox_{\lambda f}(\x) = \uargmin{z \in \RR^n} \frac{1}{2} \norm{\x - z}_2^2 + \lambda f(z) \enspace.
\end{equation}
To compute the proximity operator of $\iota_{\{\norms{\cdot}^* \leq 1\}}$, we use the Moreau's decomposition \cite[p.~65]{Parikh_Boyd_Chu_Peleato_Eckstein13} which links it to the proximity operator of the dual \Slope-norm,
\begin{align}
  \theta
  &= \Prox_{\tau \iota_{\norms{\cdot}^* \leq 1}}(\theta) + \tau \Prox_{\frac{1}{\tau} \norms{\cdot}} \left(\frac{\theta}{\tau}\right) \\
  &= \Pi_{\frac{1}{\tau} B_*}(\theta) + \tau \Prox_{\frac{1}{\tau} \norms{\cdot}} \left(\frac{\theta}{\tau}\right) \enspace,
\end{align}
where $\Pi_{\frac{1}{\tau} B_*}$ is the projection onto the unit ball $B_*$ associated to the dual norm $\norm{\cdot}^*$ scaled by a factor $1/\tau$.
The proximity operator of $\norms{\cdot}$ can be obtained obtained in several ways \cite{Zeng_Figueiredo14,bogdan2015slope}. In our numerical experiments, we use the connection between this operator and the isotonic regression following \cite{Zeng_Figueiredo14}, which can be computed in linear time.
Under the assumption that the quantity $(u_i - \lambda_i)$ is positive, non-increasing (which is obtained by sorting $|u|$ and restoring the signs and ordering afterwards, see details in \cite[Eq. (24)]{Zeng_Figueiredo14}), computing $\Prox_{\norms{\cdot}}(u)$ is equivalent to solving the problem
\begin{equation}
  \uargmin{\theta \in \RR^p} \frac{1}{2} \norm{u - \lambda - \theta}_2^2
  \qsubjq
  \theta_1 \geq \theta_2 \geq \cdots \geq \theta_p \geq 0 \enspace.
\end{equation}
We have relied on the fast implementation implementation of the PAVA algorithm (for Pool Adjacent Violators Algorithm, see for instance \cite{best1990active}), available in \texttt{scikit-learn} \cite{Pedregosa_etal11} to solve this inner problem.

The source code used for our numerical experiments is freely available at \url{http://github.com/svaiter/gslope_oracle_inequality}.

\subsection{Synthetic experiments}

To illustrate the behavior of \GSlope, we first propose two synthetic experiments in moderate dimension.
The first one is concerned with the so-called ``Caveman'' graph and the second one with the 1D path graph.

For these two scenarios, we analyze the performance following the same protocol.
For a given noise level $\sigma$, we use the bounds derived in~\Cref{thm:oracle-inequality} (we dropped the constant term 8) and in~\cite{hutter2016optimal}, \ie
\begin{equation}\label{eq:xp-practical-bounds}
  \lambda_{\rm GL} = \rho(\Gg) \sigma \sqrt{\frac{2 \log(p)}{n}}
  \qandq
  (\lambda_{\rm GS})_j = \rho(\Gg) \sigma \sqrt{\frac{2\log(p/j)}{n}} \quad \forall j \in [p] \enspace.
\end{equation}
For every $n_0$ between 0 and $p$, we generate 1000 signals as follows.
We draw $J$ uniformly at random among all the subsets of $\nint{p}$ of size $n_0$.
Then, we let $\Pi_J$ be the projection onto $\Ker \Inc_J^\top$ and generate a vector $g \sim \Nn(0,\Id_n)$.
We then construct $\xs = c (\Id - \Pi_J) g$ where $c$ is a given constant (here $c=8$).
This constrains the signal $\xs$ to be of $\Inct$-sparsity at most $p-n_0$.

We corrupt the signals by adding a zero mean Gaussian noise with variance $\sigma^2$, and run both the Graph-Lasso estimator and the Graph-Slope estimator. We then compute the mean of the mean-squared error (MSE), the false detection rate (FDR)
and the true detection rate (TDR).
To clarify our vocabulary, given an estimator $\xe$ and a ground truth $\xs$, the MSE reads $(1/n)\norm{\xs - \xe}^2$, while the FDR and TDR read, respectively,
\begin{equation}
 {\rm FDR}(\xe,\xs) =
 \begin{cases}
   \frac{\abs{\enscond{j \in [p]}{j \in \supp(\Inct \xe) \text{ and } j \not\in \supp(\Inct \xs) }}}{\abs{\supp(\Inct \xe)}} & \text{if } \Inct \xe \neq 0 \\
   0 & \text{if } \Inct \xe = 0 ,
 \end{cases}
\end{equation}
and
\begin{equation}
  {\rm TDR}(\xe,\xs) =
  \begin{cases}
    \frac{\abs{\enscond{j \in [p]}{j \in \supp(\Inct \xe) \text{ and } j \in \supp(\Inct \xs) }}}{\abs{\supp(\Inct \xs)}}, & \text{if } \Inct \xs \neq 0, \\
    0, & \text{if } \Inct \xs = 0 ,
  \end{cases}
\end{equation}
where for any $z\in \bbR^p$, $\supp(z) =\enscond{j \in [p]}{ z_j \neq 0}$.

\paragraph{Example on Caveman}

The caveman model was introduced in~\cite{watts1999networks} to model small-world phenomenon in sociology.
Here we consider its relaxed version, which is a graph formed by $l$ cliques of size $k$ (hence $n=lk$), such that with probability $q \in [0,1]$, an edge of a clique is linked to a different clique.
In our experiment, we set $l=4$, $k=10$ ($n=40$) and $q=0.1$.
We provide a visualisation of such a graph in Figure~\ref{fig:caveman-graph}.
For this realization, we have $p=180$. 
The rewired edges are indicated in blue in Figure~\ref{fig:caveman-graph} whereas the edges similar to the complete graph on 10 nodes are in black.
The signals are generated as random vectors of given $\Inct$-sparsity with a noise level of $\sigma=0.2$.
Figure~\ref{fig:caveman-weights} shows the weights decay.

Figures~\ref{fig:caveman-mse}--\ref{fig:caveman-tdr} represent the evolution of the MSE and TDR in function of the level of $\Inct$-sparsity.
We observe that while the MSE is close between the \GLasso and the \GSlope estimator at low level of sparsity, the TDR is vastly improved in the case of \GSlope, with a small price concerning the FDR (a bit more for the Monte Carlo choice of the weights).
Hence empirically, \GSlope will make more discoveries than \GLasso without impacting the overall FDR/MSE, and even improving it.

\begin{figure}[t]
  \centering
  \begin{subfigure}[b]{0.4\linewidth}
    \centering\includegraphics[trim={3cm 1cm 3cm 1cm},clip,width=\textwidth]{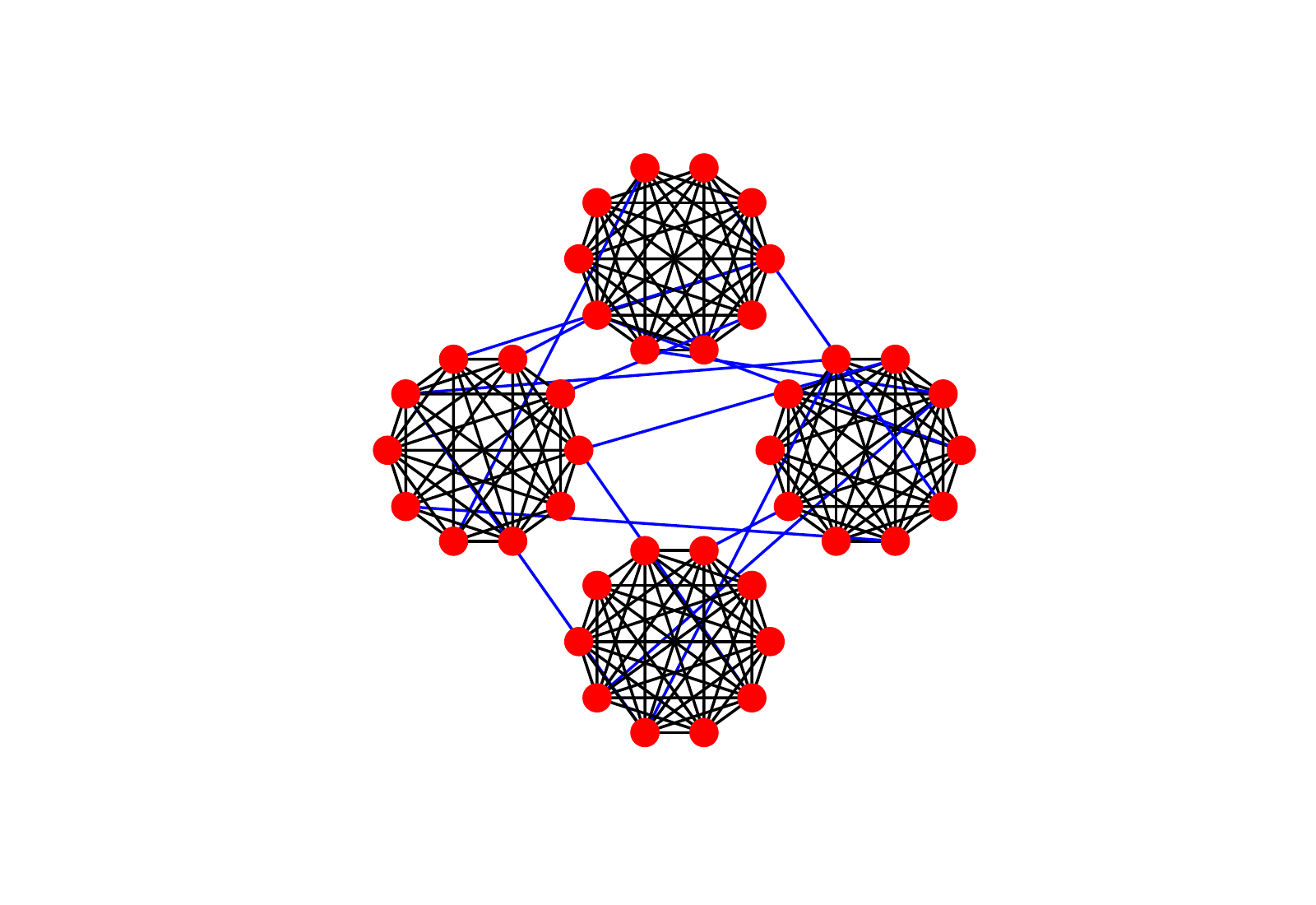}
    \caption{\label{fig:caveman-graph} Realization of a caveman graph}
  \end{subfigure}%
  \begin{subfigure}[b]{0.4\linewidth}
    \centering\includegraphics[width=\textwidth]{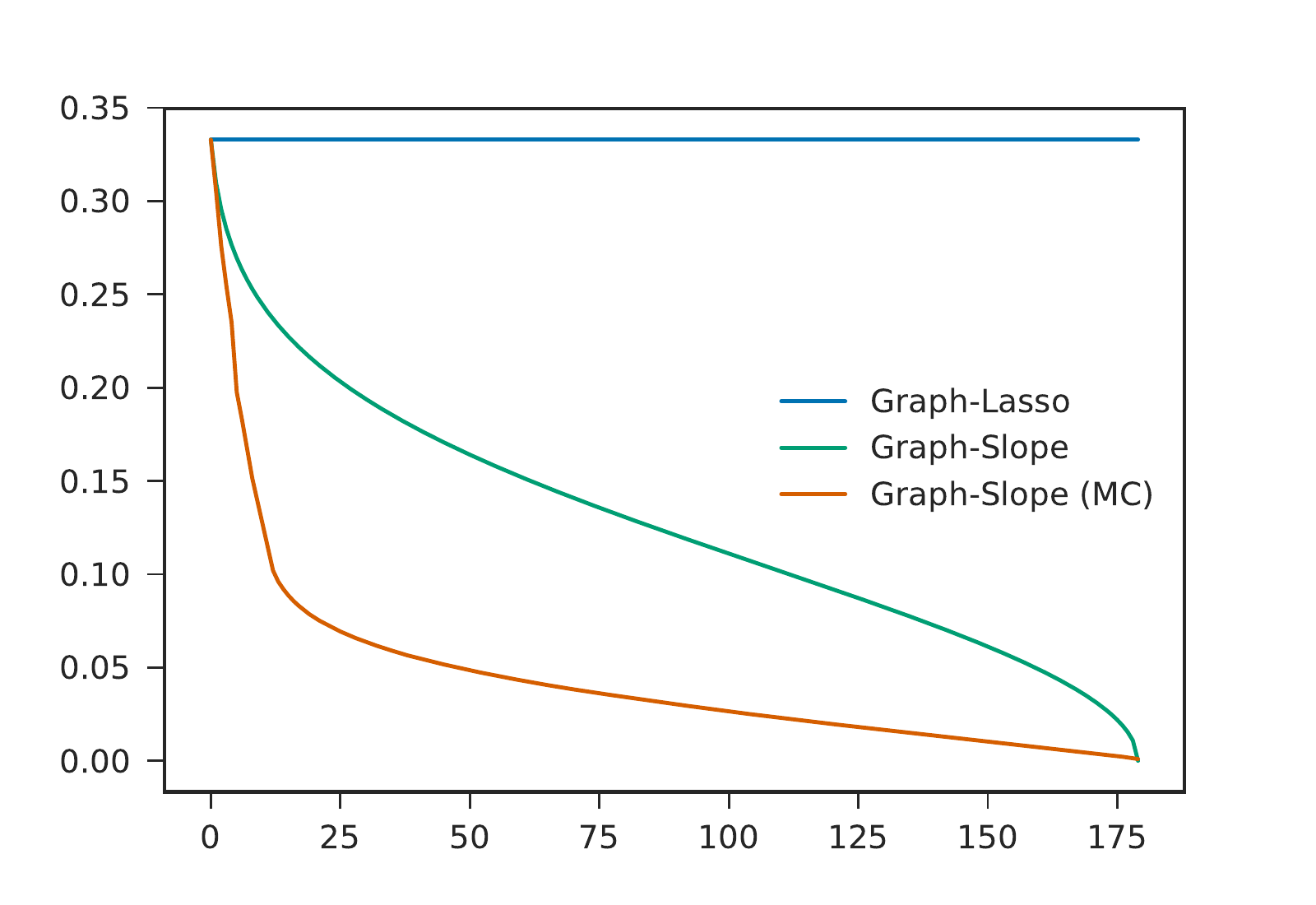}
    \caption{\label{fig:caveman-weights} Weights}
  \end{subfigure}
  \begin{subfigure}[b]{0.3\linewidth}
    \centering\includegraphics[width=\textwidth]{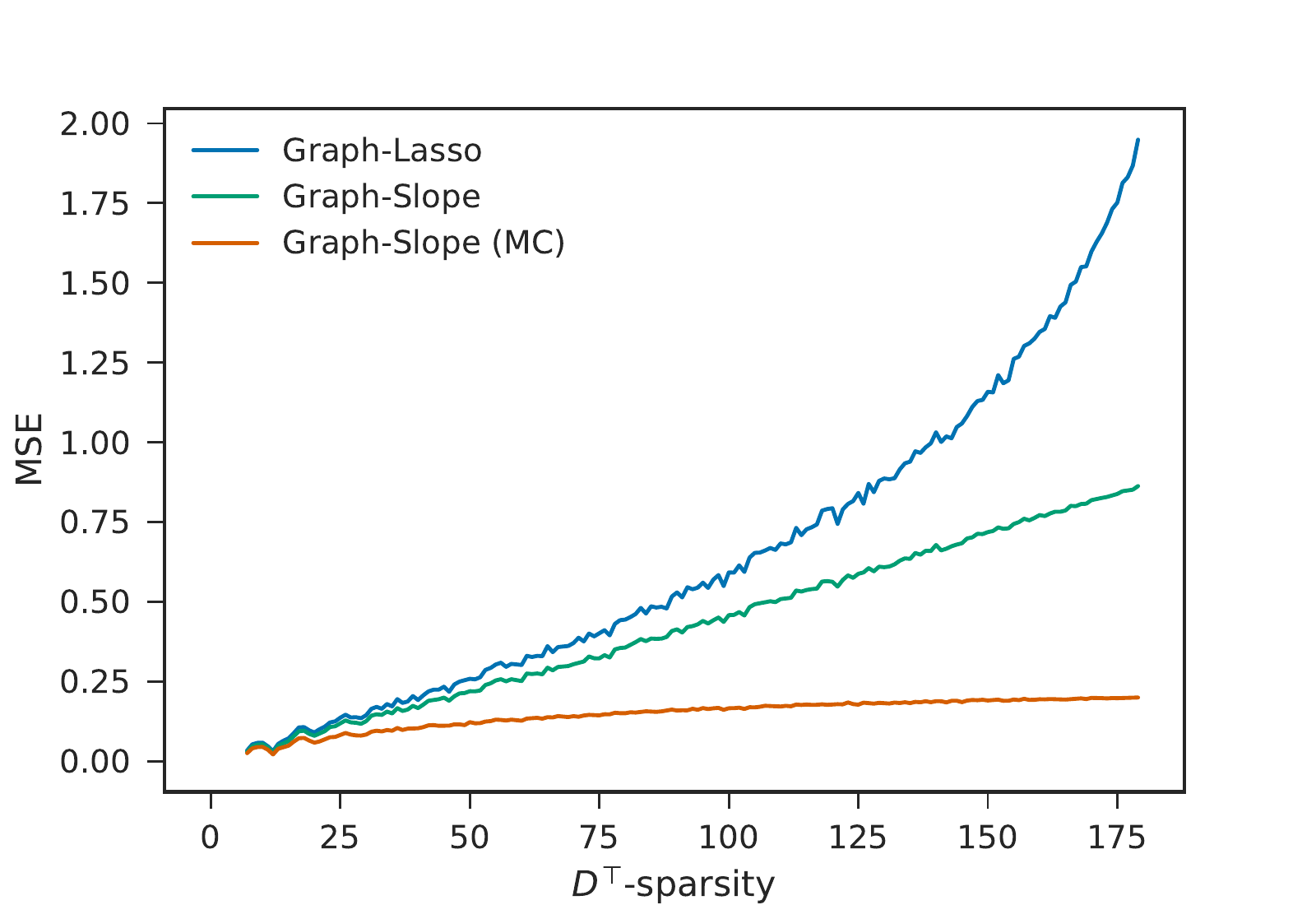}
    \caption{\label{fig:caveman-mse} Mean-square error\\ (MSE)}
  \end{subfigure}
  \begin{subfigure}[b]{0.3\linewidth}
    \centering\includegraphics[width=\textwidth]{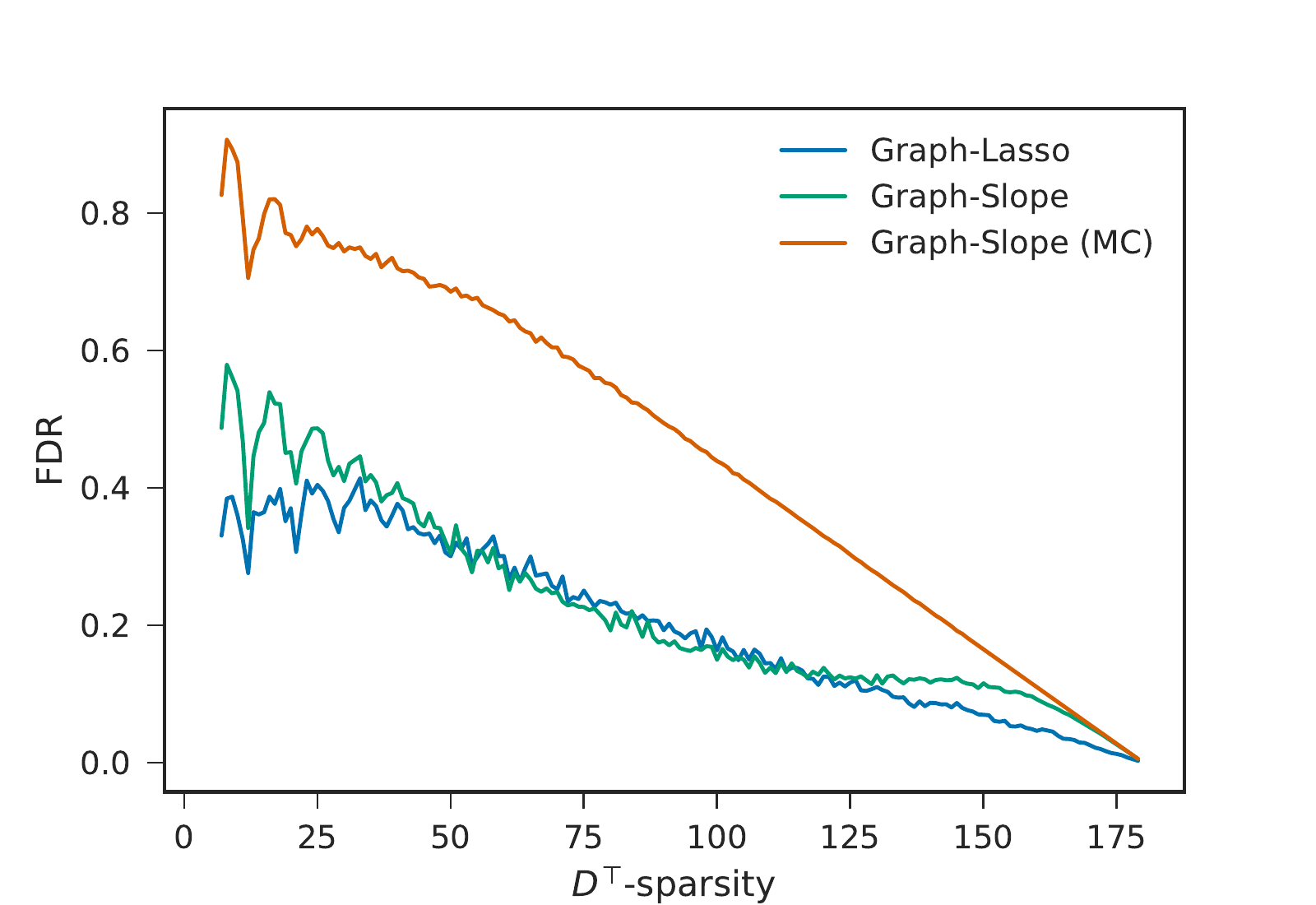}
    \caption{\label{fig:caveman-fdr} False Detection Rate\\ (FDR)}
  \end{subfigure}%
  \begin{subfigure}[b]{0.3\linewidth}
    \centering\includegraphics[width=\textwidth]{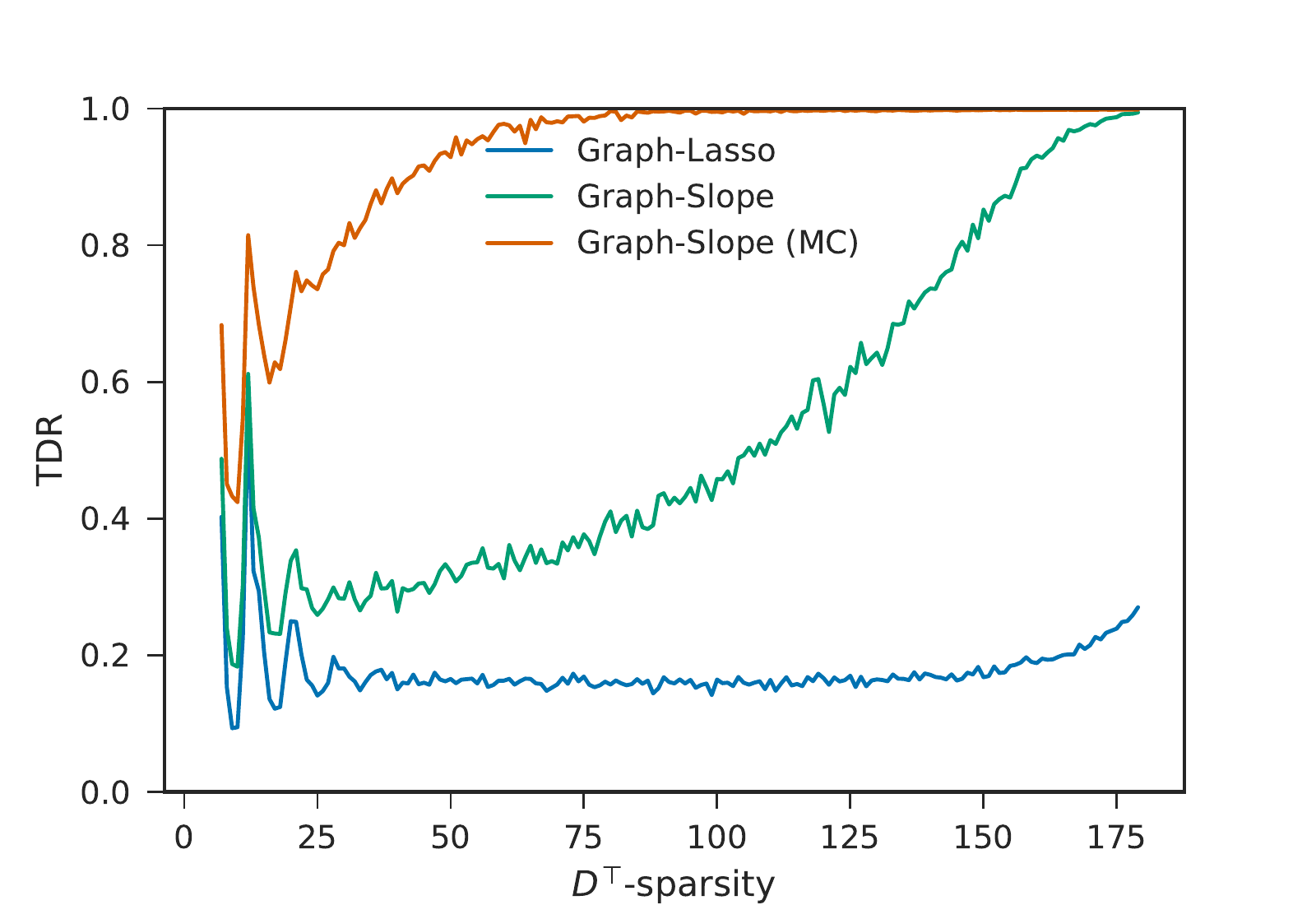}
    \caption{\label{fig:caveman-tdr} True Detection Rate\\ (TDR)}
  \end{subfigure}
  \caption{Relaxed caveman denoising}
\end{figure}

\paragraph{Example on a path: 1D--Total Variation}

The classical 1D--Total Variation corresponds to the \GLasso estimator $\hat{\beta}^{\rm GL}$ when $\G$ is the path graph over $n$ vertices, hence with $p=n-1$ edges.
In our experiments, we take $n=100$, $\sigma = 0.6$ and a very sparse gradient ($s=4$). According to these values, and taking a random amplitude for each step, we generate a piecewise-constant signal.
We display a typical realization of such a signal in Figure~\ref{fig:tv1d-ex}.
Figure~\ref{fig:tv1d-weights} shows the weights decay. Note that in this case, the Monte--Carlo weights shape differs from the one in the previous experiment.
Indeed, they are adapated to the underlying graph, contrary to the theoretical weights $\lambda_{\rm GS}$ which depend only on the size of the graph.
Figures~\ref{fig:tv1d-mse}--\ref{fig:tv1d-tdr} represent the evolution of the MSE and TDR in function of the level of $\Inct$-sparsity.
Here, \GSlope does not improve the MSE significantly.
However, as for the caveman experiments, \GSlope is more likely to make more discoveries than \GLasso for a small price concerning the FDR.

\begin{figure}[t]
  \centering
  \begin{subfigure}[b]{0.4\linewidth}
    \centering\includegraphics[width=\textwidth]{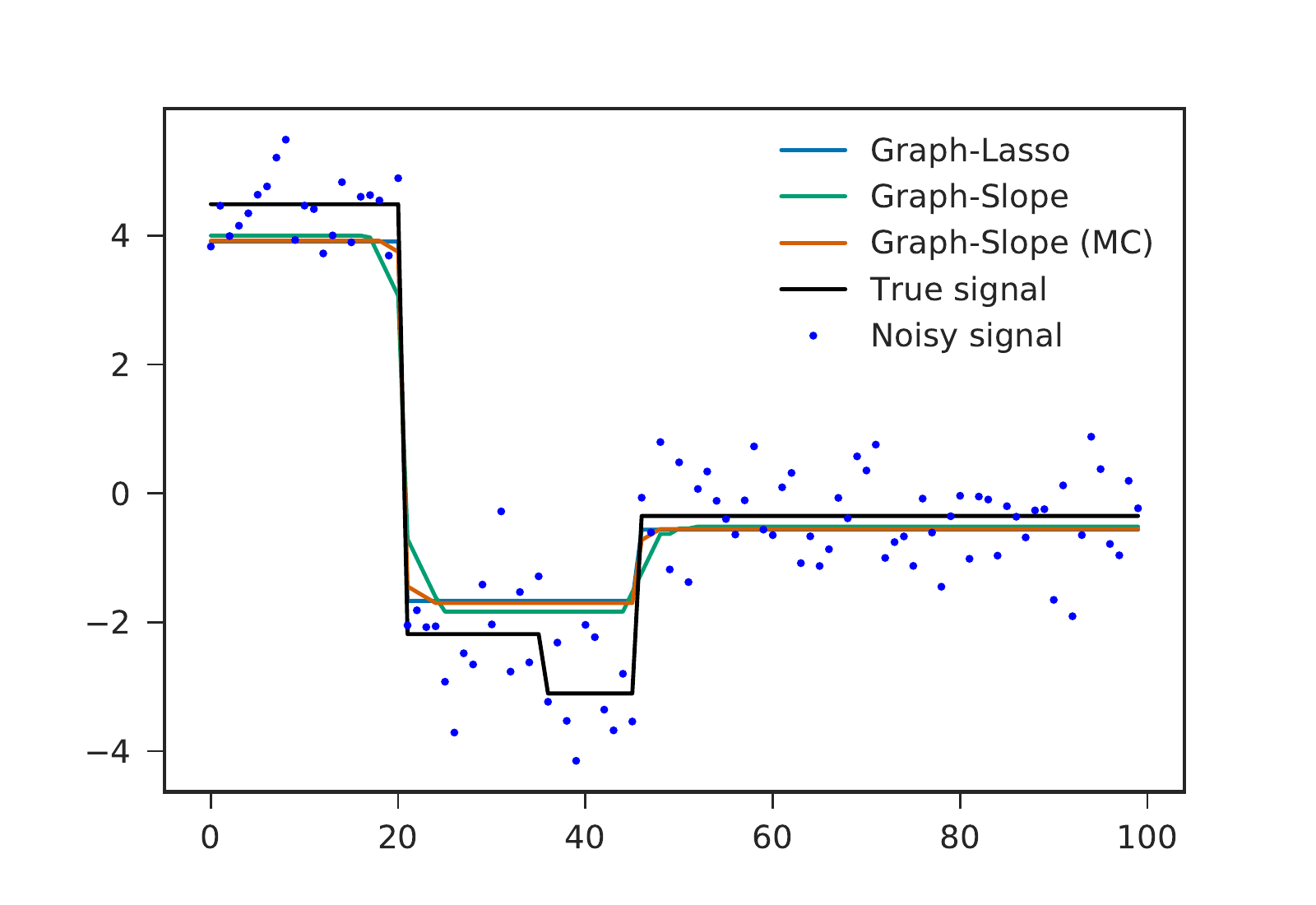}
    \caption{\label{fig:tv1d-ex} Example of signal}
  \end{subfigure}%
  \begin{subfigure}[b]{0.4\linewidth}
    \centering\includegraphics[width=\textwidth]{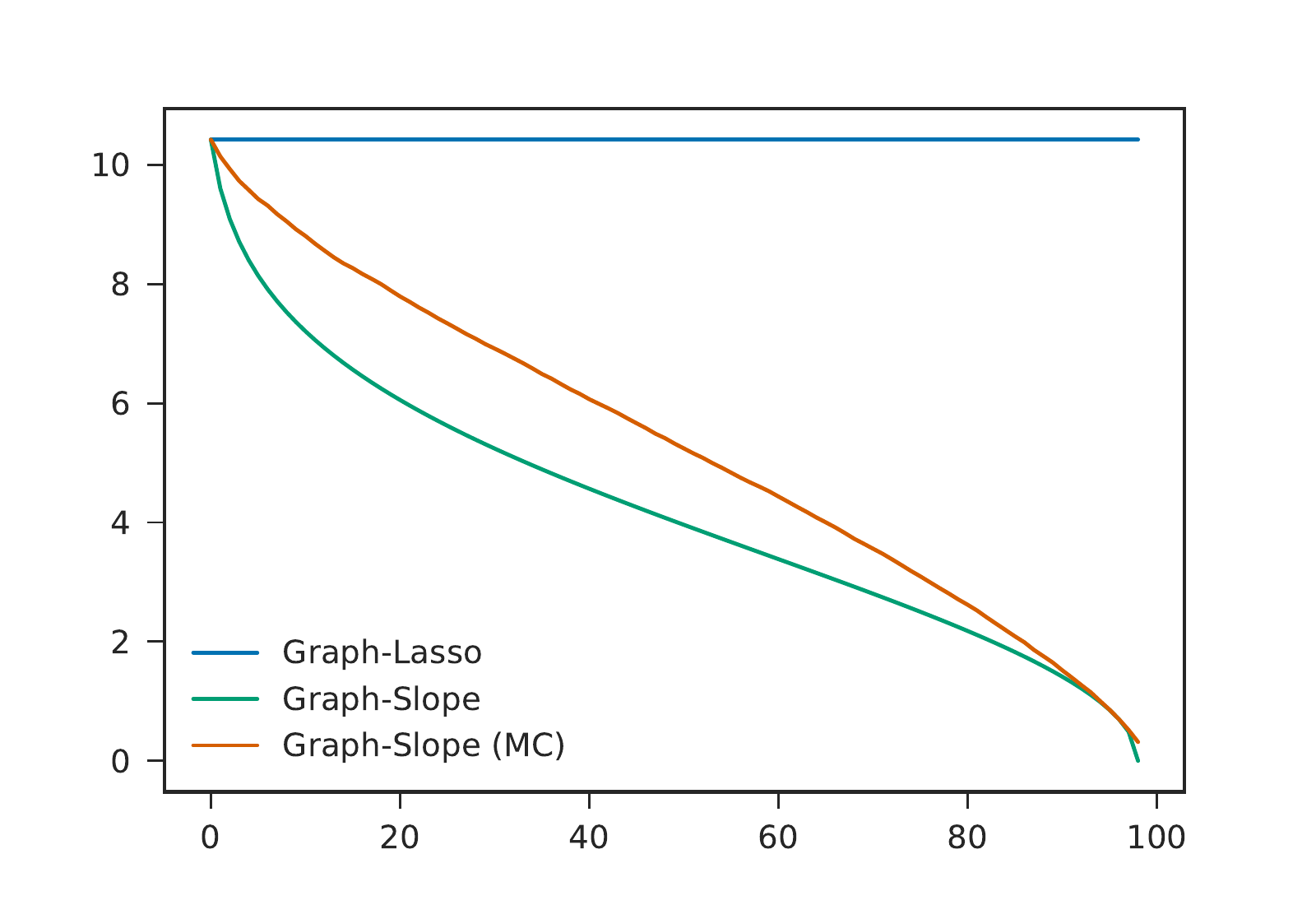}
    \caption{\label{fig:tv1d-weights} Weights}
  \end{subfigure}\\
  \begin{subfigure}[b]{0.3\linewidth}
    \centering\includegraphics[width=\textwidth]{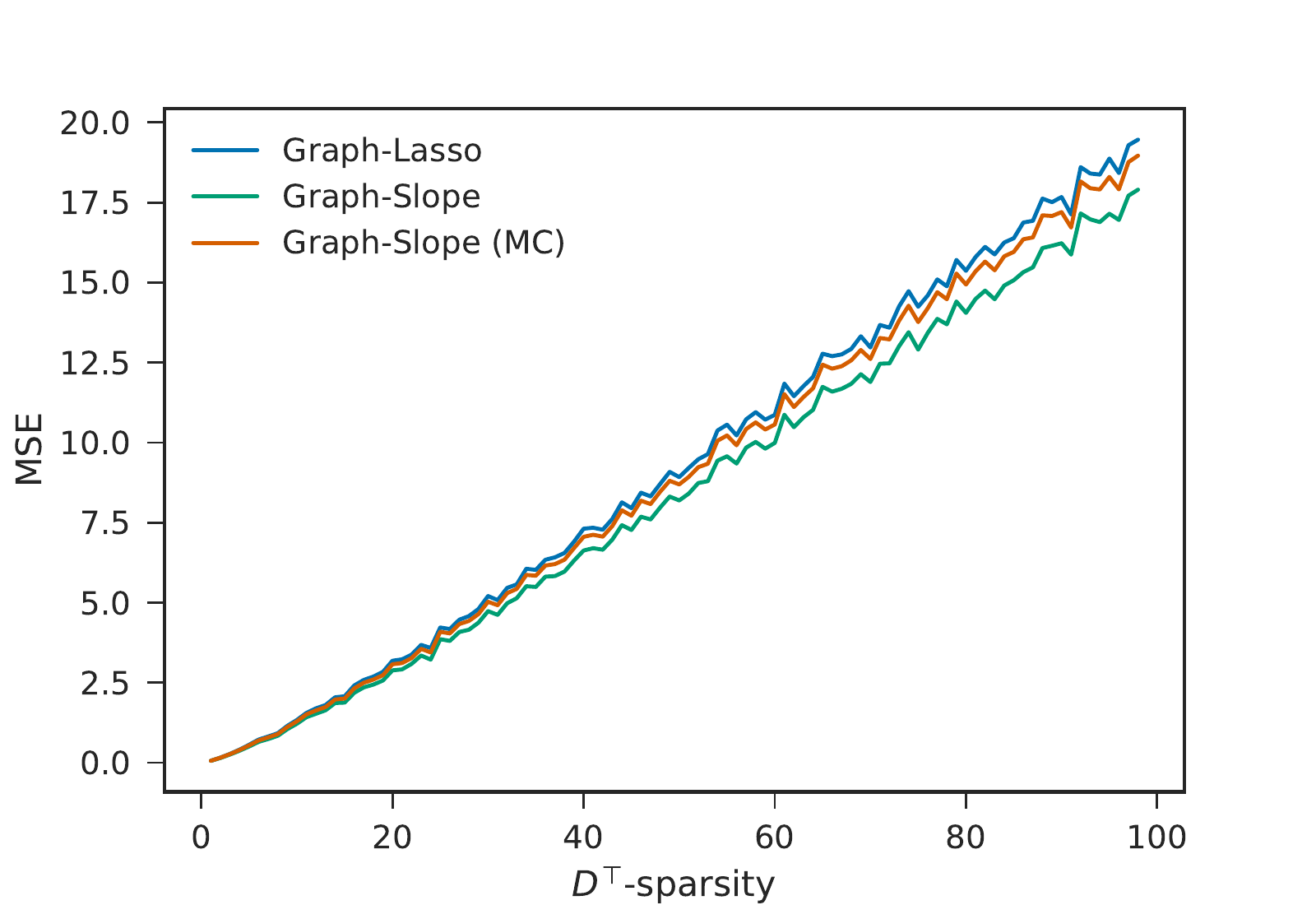}
    \caption{\label{fig:tv1d-mse} Mean-square error\\ {\phantom{\quad\quad\quad\quad} (MSE)}}
  \end{subfigure}
  \begin{subfigure}[b]{0.3\linewidth}
    \centering\includegraphics[width=\textwidth]{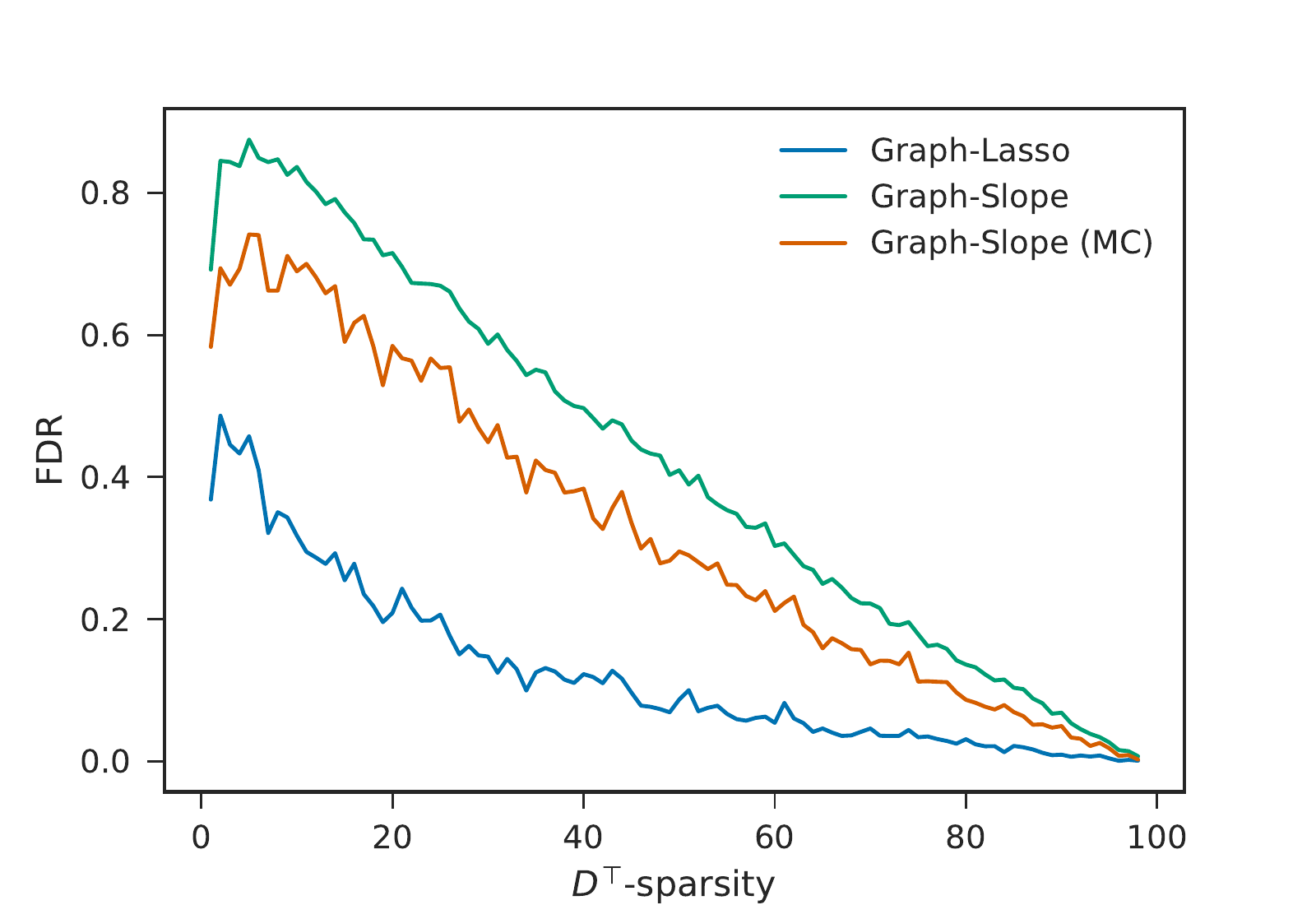}
    \caption{\label{fig:tv1d-fdr} False Detection Rate\\ {\phantom{\quad\quad\quad\quad} (FDR)}}
  \end{subfigure}%
  \begin{subfigure}[b]{0.3\linewidth}
    \centering\includegraphics[width=\textwidth]{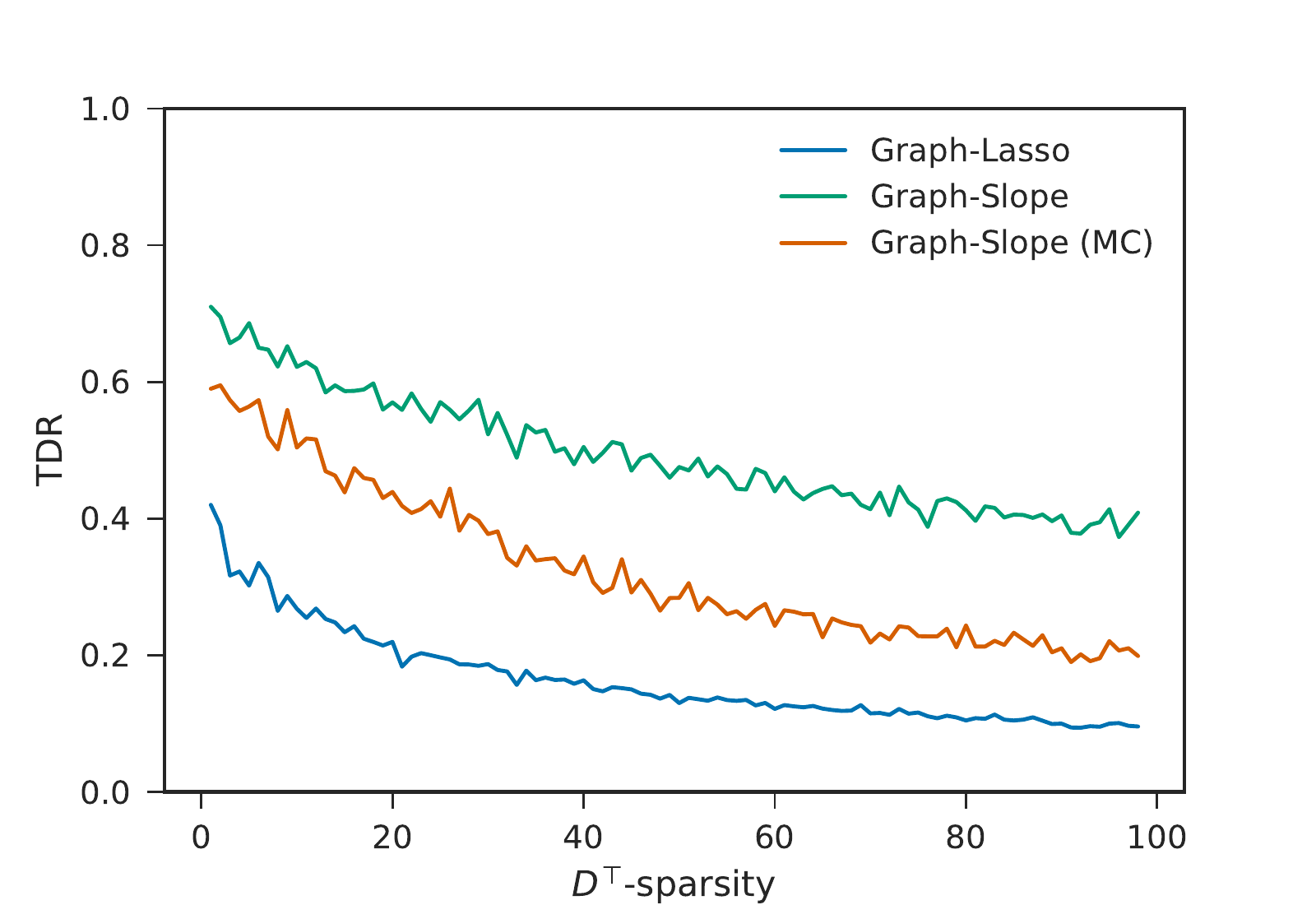}
    \caption{\label{fig:tv1d-tdr} True Detection Rate\\ {\phantom{\quad\quad\quad\quad} (TDR)}}
  \end{subfigure}
  \caption{TV1D}
\end{figure}

\subsection{Example on real data: Paris roads network}

To conclude our numerical experiments, we present our result on a real-life graph, the road network of Paris, France.
Thanks to the Python module \texttt{osmnx}~\cite{boeing2016osmnx}, which downloads and simplifies OpenStreetMap data, we run our experiments on $p=20108$ streets (edges) and $n=10205$ intersections (vertices).

The ground truth signal is constructed as in~\cite{fan2017pw} as follows. Starting from 30 infection sources, each infected intersections has probability $0.75$ to infect each of its neighbors. We let the infection process run for 8 iterations.
The resulting graph signal $\xs$ is represented in Figure~\ref{fig:paris-x0} with $\Inct$-sparsity 1586.
We then corrupt this signal by a zero mean Gaussian noise with standard-deviation $\sigma=0.8$, leading to the observations $y$ represented in Figure~\ref{fig:paris-y}.

Instead of using the parameters given in~\eqref{eq:xp-practical-bounds}, we have computed the oracle parameters for the \GLasso and \GSlope estimators by evaluating for 100 parameters of the form
\begin{equation}\label{eq:xp-practical-bounds-2}
  \lambda_{\rm GL} = \alpha \sigma \sqrt{\frac{2 \log(p)}{n}}
  \qandq
  (\lambda_{\rm GS})_j = \alpha \sigma \sqrt{\frac{2\log(p/j)}{n}} \quad \forall j \in [p] \enspace ,
\end{equation}
where $\alpha$ lives on a geometric grid inside $[10^{-5},10^{1.5}]$.
The best one in terms of MSE (\ie in term of $(1/n)\norm{\xe - \xs}^2$) is refered to as the oracle parameter.
The results are illustrated in Figure~\ref{fig:paris-lasso} for \GLasso and in Figure~\ref{fig:paris-slope} for \GSlope.
We can see the benefit of \GSlope, for instance in the center of Paris where the sources of infections are better identified as shown in the close-up, see Figures~\ref{fig:paris-x0-close}--\ref{fig:paris-slope-close}.

\begin{figure}[t]
  \centering
  \begin{subfigure}[b]{0.47\linewidth}
    \centering\includegraphics[width=\textwidth]{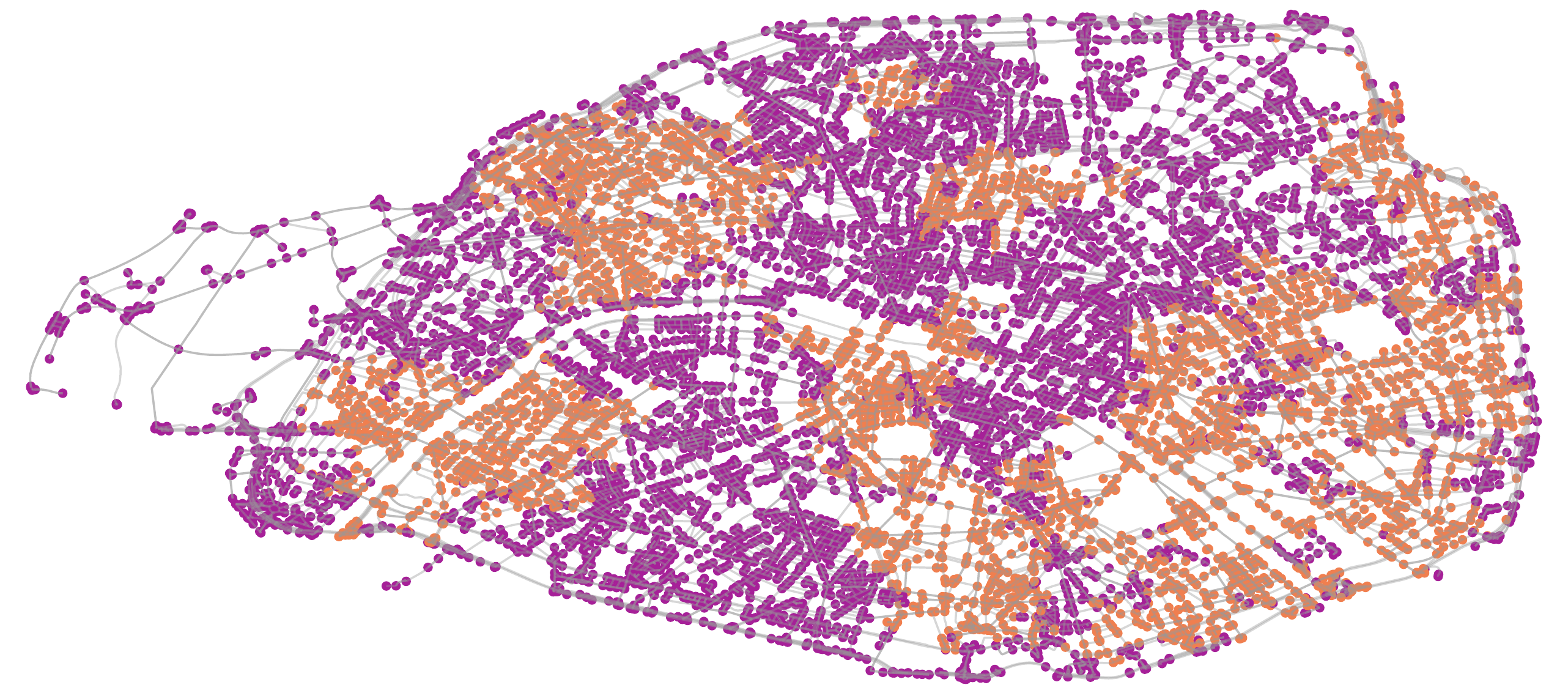}
    \caption{\label{fig:paris-x0} True signal $\xs$}
  \end{subfigure}%
  \begin{subfigure}[b]{0.47\linewidth}
    \centering\includegraphics[width=\textwidth]{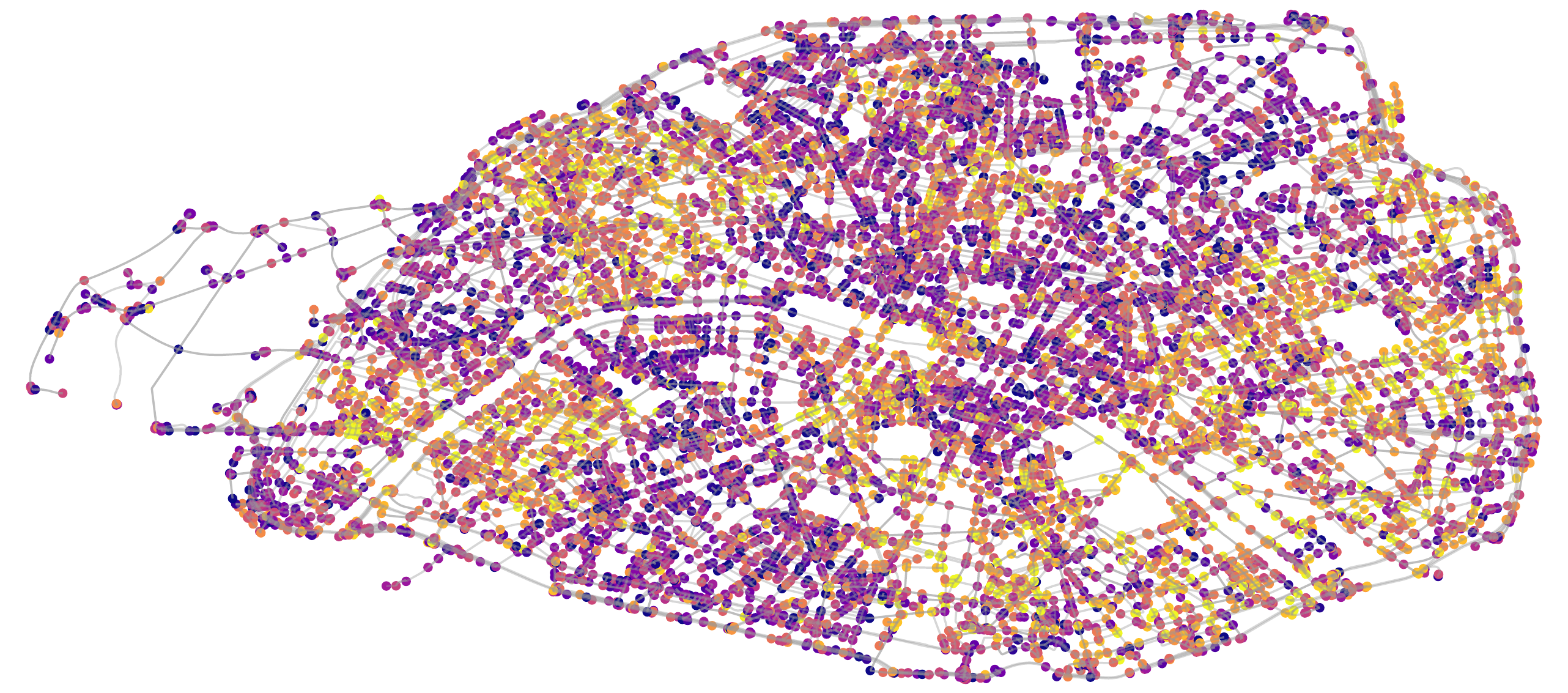}
    \caption{\label{fig:paris-y} Noisy signal $y$}
  \end{subfigure}\\
  \begin{subfigure}[b]{0.47\linewidth}
    \centering\includegraphics[width=\textwidth]{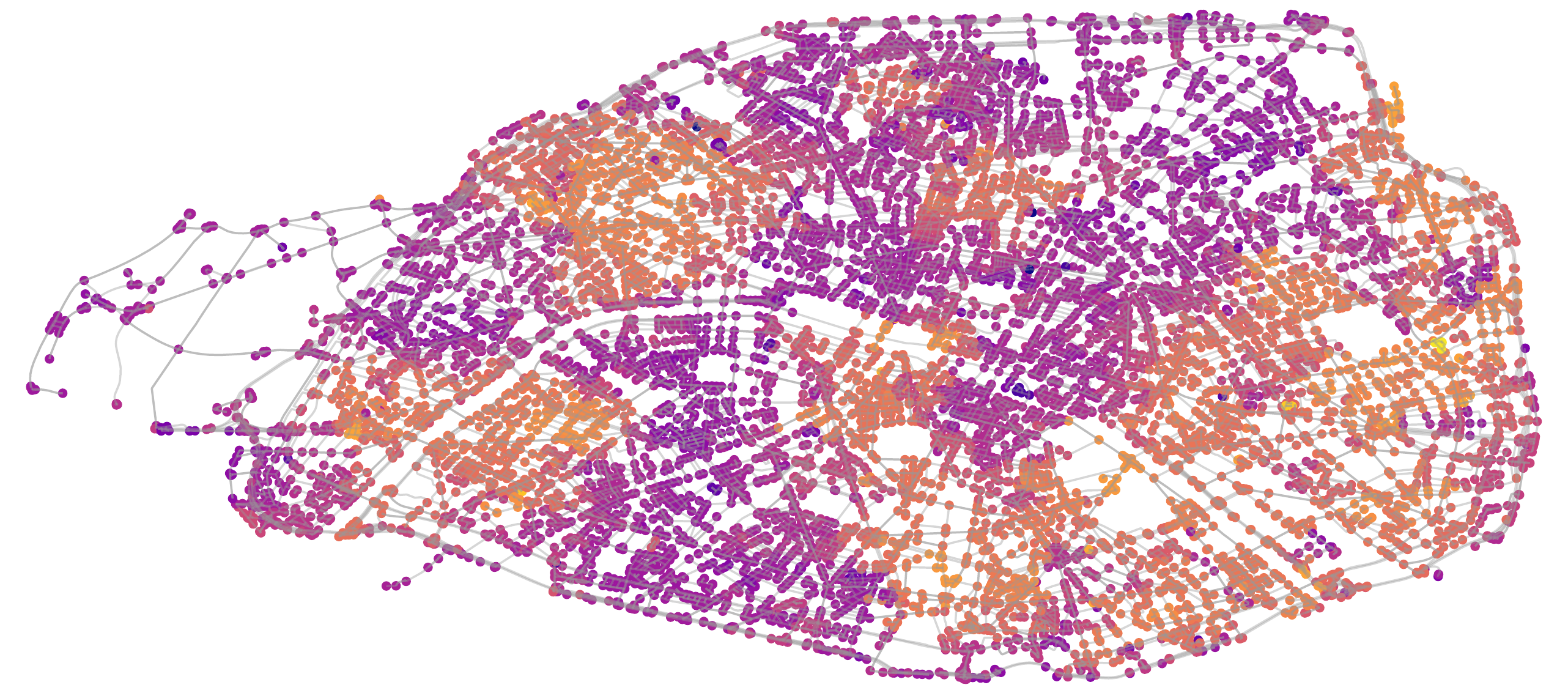}
    \caption{\label{fig:paris-lasso} \GLasso $\hat{\beta}^{\rm GL}$\\MSE=0.070,FDR=83.4\%,\\ TDR=52.1\%}
  \end{subfigure}%
  \begin{subfigure}[b]{0.47\linewidth}
    \centering\includegraphics[width=\textwidth]{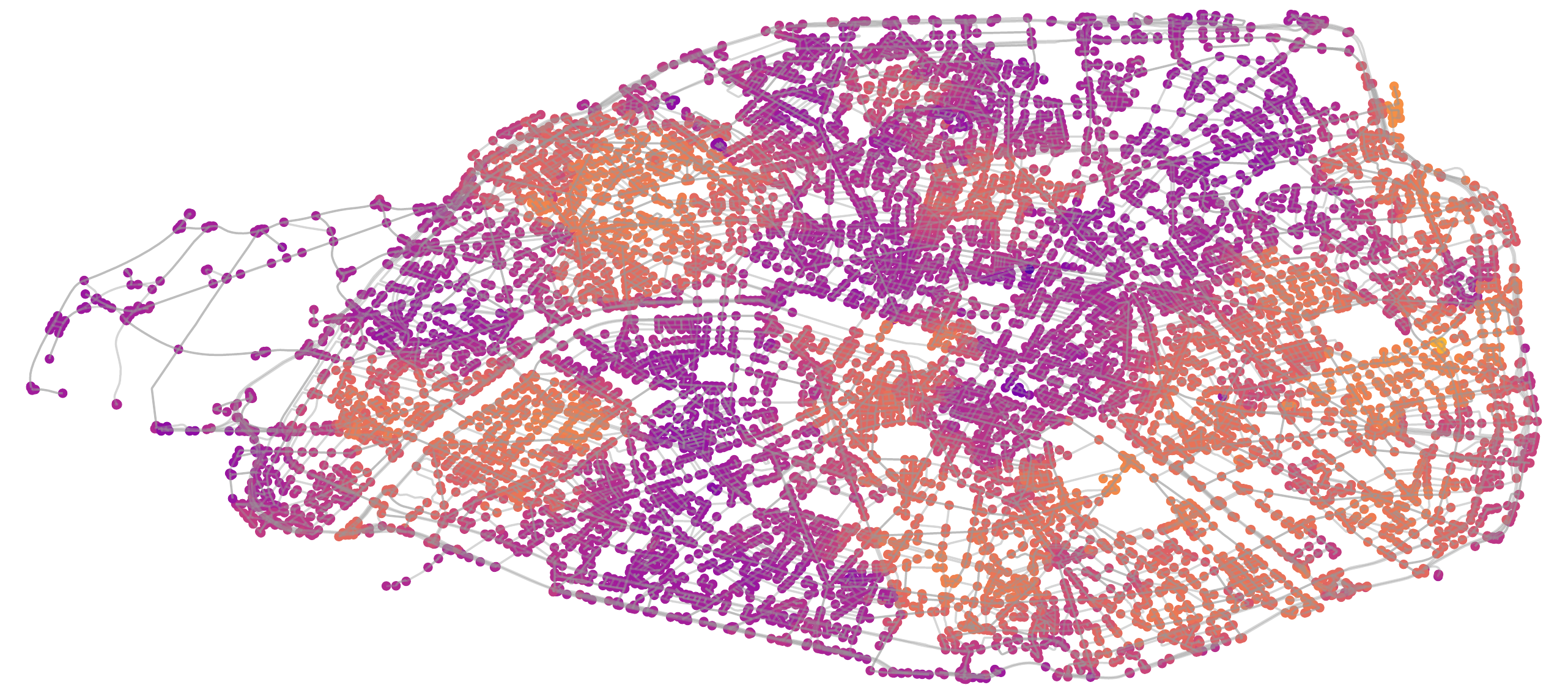}
    \caption{\label{fig:paris-slope} \GSlope $\hat{\beta}^{\rm GS}$\\
    MSE=0.074,FDR=88.6\%,\\ TDR=73.5\%}
  \end{subfigure}\\
  \begin{subfigure}[b]{0.32\linewidth}
    \centering\includegraphics[trim={13cm 4cm 7cm 4cm},clip,width=\textwidth]{paris-x0}
    \caption{\label{fig:paris-x0-close} True signal close-up}
  \end{subfigure}\hspace{0.1cm}
  \begin{subfigure}[b]{0.32\linewidth}
    \centering\includegraphics[trim={13cm 4cm 7cm 4cm},clip,width=\textwidth]{paris-lasso}
    \caption{\label{fig:paris-lasso-close} \GLasso close-up}
  \end{subfigure}\hspace{0.1cm}
  \begin{subfigure}[b]{0.32\linewidth}
    \centering\includegraphics[trim={13cm 4cm 7cm 4cm},clip,width=\textwidth]{paris-slope}
    \caption{\label{fig:paris-slope-close} \GSlope close-up}
  \end{subfigure}\\
  \centering\includegraphics[trim={0cm 1cm 0cm 8cm},clip,width=\textwidth]{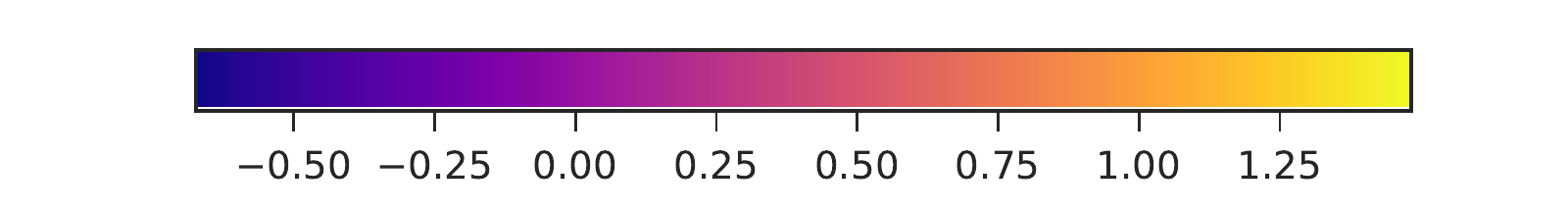}
  \caption{Paris road network. Comparison of oracle choice for the tuning parameter between \GLasso and \GSlope.}
\end{figure}

\appendix



\section{Preliminary lemmas}
\label{sub:preliminary_lemmas}

\begin{lem}
      \label{lem:algebra-slope-norm}
  Let $s \in \nint{p}$.
  For any two $\x, \xe \in \bbR^n$ such that $\norm{\Inct \x}_0 \leq s$ we have
  \begin{equation}
    \norms{\Inct \x} - \norms{\Inct \xe}
    \leq
    \sum_{j=1}^s \lambda_j |u|_j^\downarrow - \sum_{j=s+1}^p \lambda_j |u|_j^\downarrow ,
    \le
    \Lambda(\lambda, s) \norm{u}_2 - \sum_{j=s+1}^p \lambda_j |u|_j^\downarrow ,
  \end{equation}
  where $u = \Inct(\xe - \x)$ and
    $\Lambda(\lambda, s) = \Big( \sum_{j=1}^s \lambda_j^2 \Big)^{1/2}$.
    \end{lem}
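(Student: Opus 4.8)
The plan is to prove the first inequality as a statement purely about the ordered $\ell_1$ norm, and then to obtain the second from Cauchy--Schwarz. Writing $a = \Inct\x$, $b = \Inct\xe$, we have $u = b-a$ and $\norm{a}_0\le s$; set $S=\supp(a)$ and $s_0 = |S|\le s$. Since $\Inct$ disappears from the statement, the first inequality is equivalent to the \Slope-norm estimate $\norms{a}-\norms{b}\le \sum_{j=1}^s \lambda_j|u|_j^\downarrow - \sum_{j=s+1}^p \lambda_j|u|_j^\downarrow$, i.e. to the lower bound $\norms{a+u}\ge \norms{a} - \sum_{j=1}^s \lambda_j|u|_j^\downarrow + \sum_{j=s+1}^p \lambda_j|u|_j^\downarrow$. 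The tool I would use is the max-representation $\norms{v}=\max_{\tau\in\mathfrak{S}_p}\sum_{j=1}^p \lambda_j|v_{\tau(j)}|$ recalled earlier in the paper: it suffices to exhibit one convenient permutation.

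Concretely, I would assign the top $s_0$ weights $\lambda_1,\dots,\lambda_{s_0}$ to the coordinates of $S$, matched in decreasing order to $(|a_i|)_{i\in S}$, and the remaining weights $\lambda_{s_0+1},\dots,\lambda_p$ to the coordinates of $S^c$, matched in decreasing order to $(|u_i|)_{i\in S^c}$. Since $a$ vanishes off $S$, evaluating $\sum_j \lambda_j |b_{\tau(j)}|$ for this permutation gives
\[
  \norms{a+u}\ \ge\ \sum_{i\in S}\lambda_{r(i)}\,|a_i+u_i|\ +\ \sum_{k=1}^{p-s_0}\lambda_{s_0+k}\,|u_{S^c}|_k^\downarrow,
\]
where $r(i)$ is the rank of $|a_i|$ within $S$. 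Using $|a_i+u_i|\ge|a_i|-|u_i|$ and the fact that the matching makes $\sum_{i\in S}\lambda_{r(i)}|a_i| = \sum_{k=1}^{s_0}\lambda_k|a|_k^\downarrow = \norms{a}$, the problem reduces to two one-sided estimates.

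The $S$-part is routine: by the rearrangement inequality $\sum_{i\in S}\lambda_{r(i)}|u_i|\le \sum_{k=1}^{s_0}\lambda_k |u_S|_k^\downarrow$, and since deleting coordinates only lowers the sorted magnitudes, $|u_S|_k^\downarrow\le |u|_k^\downarrow$, so this term is at most $\sum_{j=1}^s\lambda_j|u|_j^\downarrow$. The $S^c$-part is what I expect to be the crux: I must manufacture the full tail $\sum_{j=s+1}^p\lambda_j|u|_j^\downarrow$ out of $\sum_k \lambda_{s_0+k}|u_{S^c}|_k^\downarrow$, despite $u_{S^c}$ having \emph{smaller} sorted entries than $u$. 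The mechanism is the interlacing bound $|u_{S^c}|_k^\downarrow\ge |u|_{k+s_0}^\downarrow$ (at most $s_0$ of the $k+s_0$ largest entries of $u$ lie in $S$, hence at least $k$ lie in $S^c$). Re-indexing $j=k+s_0$ and using $s_0\le s$ with the monotonicity of the weights then yields $\sum_k \lambda_{s_0+k}|u_{S^c}|_k^\downarrow \ge \sum_{j=s_0+1}^p \lambda_j|u|_j^\downarrow \ge \sum_{j=s+1}^p \lambda_j|u|_j^\downarrow$. Combining the three displays proves the first inequality.

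Finally, the second inequality is immediate from Cauchy--Schwarz:
\[
  \sum_{j=1}^s \lambda_j\,|u|_j^\downarrow \ \le\ \Big(\sum_{j=1}^s \lambda_j^2\Big)^{1/2}\Big(\sum_{j=1}^s (|u|_j^\downarrow)^2\Big)^{1/2}\ \le\ \Lambda(\lambda,s)\,\norm{u}_2,
\]
because the sum of the $s$ largest squared entries of $u$ is at most $\norm{u}_2^2$; subtracting the common tail $\sum_{j=s+1}^p\lambda_j|u|_j^\downarrow$ from both sides finishes the argument. The only genuinely delicate point is the interlacing estimate for the $S^c$-part combined with the weight shift $\lambda_{s_0+k}\rightsquigarrow\lambda_{s+m}$; everything else is rearrangement and Cauchy--Schwarz.
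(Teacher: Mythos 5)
Your proof is correct and follows essentially the same route as the paper's: both lower-bound $\norms{\Inct\xe}$ by evaluating the max-representation of the \Slope norm at a permutation that places the support of $\Inct\x$ in the top positions, apply the triangle inequality there and the identity $b=u$ off the support, and finish the tail term with the interlacing bound $|u_{S^c}|_k^\downarrow\ge|u|_{k+s_0}^\downarrow$ plus monotonicity of the weights. The only (immaterial) difference is that you track the exact support size $s_0\le s$ and pad at the end, whereas the paper pads to $s$ from the start; the Cauchy--Schwarz step for the second inequality is identical.
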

\begin{proof}
    This is a consequence of \cite[Lemma A.1]{bellec2016slope}. We provide a proof here for completeness.
    The second inequality is simple consequence of the Cauchy-Schwarz inequality. Indeed,
    $(\sum_{j=1}^s \lambda_j |u|_j^\downarrow )^2\le (\sum_{j=1}^s \lambda_j^2 ) (\sum_{j=1}^s (|u|_j^\downarrow)^2) \le \norm{u}_2^2  (\sum_{j=1}^s \lambda_j^2 )$.

    Let $v = \Inct \x$ and $w = \Inct \xe$ and note that $u= w - v$.
    Let $\varphi$ be a permutation of $[p]$ such that $\norms{v} = \sum_{j=1}^s \lambda_j |v|_{\varphi(j)}$.
    The first inequality can be rewritten as
    \begin{equation}
        \norms{v} - \norms{w}
        =
        \norms{v} - \sup_{\tau} \sum_{j=1}^p \lambda_j |w|_{\tau(j)}
        \leq
        \sum_{j=1}^s \lambda_j |u|_j^\downarrow
        -\sum_{j=s+1}^p \lambda_j |u|_j^\downarrow,
        \label{eq:previous-lem-alg-slope-norm}
    \end{equation}
    where the supremum is taken over all permutations $\tau$ of $[p] $.
    We now prove \eqref{eq:previous-lem-alg-slope-norm}.
    Let $\tau$ be a permutation of $[p]$ such that $\varphi(j) = \tau(j)$ for all $j=1,\dots,s$.
    Then by the triangle inequality, we have
    \begin{equation}
        |v|_{\varphi(j)} - |w|_{\varphi(j)}
        =
        |v|_{\varphi(j)} - |w|_{\tau(j)}
        \le
        |u|_{\tau(j)}
    \end{equation}
    for each $j=1,\dots,s$ since $u = w-v$. Furthermore, for each $j>s$, it holds that $v_{\tau(j)} = 0$
    so that $w_{\tau(j)} = u_{\tau(j)}$. Thus,
    \begin{equation}
        \norms{v} - \norms{w}
        \le
        \sum_{j=1}^s \lambda_j |v|_{\varphi(j)}
        -
        \sum_{j=1}^p \lambda_j |w|_{\tau(j)}
        \le
        \sum_{j=1}^s \lambda_j |u|_{\tau(j)}
        -
        \sum_{j=s+1}^p\lambda_j |u|_{\tau(j)}.
    \end{equation}
    It is clear that $\sum_{j=1}^s \lambda_j |u|_{\tau(j)} \le \sum_{j=1}^s \lambda_j |u|_j^\downarrow$.
    Finally, notice that it is always possible to find a permutation $\tau$ such that $(|u|_{\tau(j)})_{j>s}$ is non-decreasing.
    For such choice of $\tau$ we have
    $- \sum_{j=s+1}^p\lambda_j |u|_{\tau(j)}
    \le
        -\sum_{j=s+1}^p \lambda_j |u|_j^\downarrow$.
\end{proof}

\begin{lem}\label{lem:control_lambdas}
For the choice of weights:
$ \forall j \in [p], \lambda_j =  C \sqrt{\log(2p/j)}$,
the following inequalities hold true
\begin{equation}\label{eq:bound-Lambda}
C\sqrt{ s \log(2 p /s)}
\leq \Lambda(\lambda, s)
\leq C\sqrt{s \log(2 e p /s)} 
\enspace.
\end{equation}

\end{lem}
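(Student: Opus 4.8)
The plan is to reduce both inequalities to elementary bounds on $\log(s!)$. First I would expand the square of $\Lambda(\lambda,s)$ using the prescribed weights: since $\lambda_j^2 = C^2\log(2p/j)$,
\begin{equation}
  \Lambda(\lambda,s)^2 = C^2\sum_{j=1}^s \log(2p/j) = C^2\Big( s\log(2p) - \sum_{j=1}^s \log j \Big) = C^2\big( s\log(2p) - \log(s!) \big).
\end{equation}
Thus everything reduces to sandwiching $\log(s!)$ between $s\log s - s$ and $s\log s$, after which taking square roots in the identity above recovers the two claimed bounds.

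For the lower bound on $\Lambda(\lambda,s)$ I would use the trivial termwise estimate $\log j \le \log s$ for every $j\in[s]$, which gives $\log(s!)\le s\log s$. Substituting into the display above yields $\Lambda(\lambda,s)^2 \ge C^2\big(s\log(2p) - s\log s\big) = C^2 s\log(2p/s)$, and taking square roots gives the left inequality.

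For the upper bound I would invoke the standard Stirling-type inequality $s! \ge (s/e)^s$, equivalently $\log(s!)\ge s\log s - s$, which itself follows from comparing the sum $\sum_{j=1}^s \log j$ with the integral $\int_1^s \log x\,dx = s\log s - s + 1$. This gives $\Lambda(\lambda,s)^2 \le C^2\big(s\log(2p) - s\log s + s\big) = C^2 s\log(2ep/s)$, and taking square roots gives the right inequality.

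There is no genuine obstacle here: both bounds are one-line consequences of classical factorial estimates once the identity $\sum_{j=1}^s\log(2p/j) = s\log(2p)-\log(s!)$ is written down. The only point requiring minimal care is placing the factor $e$ correctly, \ie ensuring that the Stirling lower bound in the exact form $\log(s!)\ge s\log s - s$ (rather than a sharper variant) is what produces precisely the $2ep/s$ inside the logarithm.
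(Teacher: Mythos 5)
Your proof is correct and follows essentially the same route as the paper: both reduce the claim to the identity $\sum_{j=1}^s\log(2p/j)=s\log(2p)-\log(s!)$ and then apply the Stirling-type bounds $s\log(s/e)\leq\log(s!)\leq s\log s$. Your write-up is in fact slightly more careful than the paper's, which states the final chain with an equality where an inequality is meant.
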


\begin{proof} Reminding Stirling's formula $s \log(s / e) \leq \log(s !) \leq s\log (s)$, one can check that
  \begin{align}
   s \log\left(\frac{2 p}{s}\right)
  & \leq  \sum_{j=1}^s \log \left(\frac{2 p}{j} \right) \leq  s \log(2 p) -  \log(s!) = \log\left(\frac{2 e p}{s}\right) \enspace.
  \end{align}
\end{proof}

\begin{lem}
    \label{lem:strong-convexity}
  Let $z, \eps\in \bbR^n$, $y = z + \eps$, and $\xe$ a solution of Problem~\eqref{eq:gslope}.
  Then, for all $\x \in \bbR^n$,
  \begin{equation}
      \frac 1 2 \left( \norm{ \xe - z}^2 -
      \norm{ \x - z}^2 + \norm{\xe -
  \x}^2 \right)
      \leq \eps^\top(\xe - \x) + n \norms{\Inct \x} - n \norms{\Inct \xe} .
  \end{equation}
\end{lem}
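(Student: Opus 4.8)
The plan is to read off the first-order optimality (Fermat) condition for the convex program \eqref{eq:gslope} and then convert it into the stated inequality by an \emph{exact} quadratic identity. The extra term $\tfrac12\norm{\xe-\x}^2$ on the left is precisely what this identity produces, and it is the source of the ``sharpness'' of the subsequent oracle inequality; it comes for free because the data-fidelity term is quadratic (equivalently, $\tfrac1n$-strongly convex), so no approximation is lost.

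Write the objective of \eqref{eq:gslope} as $F(\x) = \tfrac{1}{2n}\norm{y-\x}^2 + \norms{\Inct\x}$. Its first term is everywhere differentiable with gradient $\tfrac1n(\x-y)$, and its second term $P(\x):=\norms{\Inct\x}$ is convex, finite, proper and lower semicontinuous, hence admits a subgradient at every point. First I would apply Fermat's rule $0\in\partial F(\xe)$ at the minimizer $\xe$. Since the quadratic part is smooth, the subdifferential obeys the sum rule, so this reads $\tfrac1n(\xe-y)+\partial P(\xe)\ni 0$, i.e. the vector $\hat h:=\tfrac1n(y-\xe)$ is a subgradient of $P$ at $\xe$.

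Next I would use the subgradient inequality for $P$ at $\xe$ tested against the arbitrary point $\x$, namely $P(\x)\ge P(\xe)+\hat h^\top(\x-\xe)$, which rearranges to $n\,\hat h^\top(\xe-\x)\ge n\,\norms{\Inct\xe}-n\,\norms{\Inct\x}$. Substituting $\hat h=\tfrac1n(y-\xe)$ and clearing the factor $n$ gives $(y-\xe)^\top(\xe-\x)\ge n\,\norms{\Inct\xe}-n\,\norms{\Inct\x}$.

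Finally I would massage the left-hand side using $y=z+\eps$. Writing $y-\xe=\eps-(\xe-z)$ yields $(y-\xe)^\top(\xe-\x)=\eps^\top(\xe-\x)-(\xe-z)^\top(\xe-\x)$, and the polarization identity $(\xe-z)^\top(\xe-\x)=\tfrac12\big(\norm{\xe-z}^2+\norm{\xe-\x}^2-\norm{\x-z}^2\big)$ (which is exact because the three vectors satisfy $(\xe-z)-(\xe-\x)=\x-z$) turns the previous display, after moving terms across, into exactly the claimed bound. The only point requiring any care is the optimality condition in subdifferential form, i.e. the sum rule for $\partial(\text{smooth}+\text{convex})$; this is valid here precisely because the smooth part is differentiable on all of $\bbR^n$. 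Everything else is elementary algebra, and the conceptual content is simply that the quadratic term contributes its second-order remainder exactly rather than as an inequality.
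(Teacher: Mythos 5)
Your proof is correct and is essentially the paper's argument in different packaging: the paper invokes the $\tfrac1n$-strong convexity of the objective and takes the subgradient $\mathbf d=0$ at the minimizer, which is exactly your Fermat condition plus the exact quadratic (polarization) identity. Both routes yield the same inequality with no loss.
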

\begin{proof}
    The objective function of the minimization problem~\eqref{eq:gslope} is the sum of two convex functions.
    The first term, \ie the function $\x\to \frac{1}{2n} \norm{y - \x}^2$, is ($1/n$)-strongly convex with respect to the Euclidean norm $\norm{\cdot}$.
    The sum of a 1-strongly convex function and a convex function is 1-strongly convex, and thus by multiplying by $n$ we have
    \begin{equation}
        \frac 1 2 \norm{\xe - y}^2
        +
        n \norms{\Inct \xe}
        \le
        n \mathbf d^T(\xe - \x)
        +
        \frac 1 2 \norm{\x - y}^2
        +
        n \norms{\Inct \x}
        - \frac 1 2 \norm{\x - \xe}^2
    \end{equation}
    for all $\x\in\bbR^n$ and for any $\mathbf d$ in the subdifferential of the objective function \eqref{eq:gslope} at $\xe$.
    Since $\xe$ is a minimizer of $\eqref{eq:gslope}$, we can choose $\mathbf d=0$ in the above display.
    For $\mathbf d = 0$, the previous display is equivalent to the claim of the Lemma.
\end{proof}

\begin{lem}\label{lem:Pi}
Let us suppose that the graph $\G$ has $K$ connected components $C_1,\dots,C_K$. Then,
\begin{equation}
  \ker(\Inct)=\Span(\1_{C_1})\oplus\cdots \oplus \Span(\1_{C_K}) ,
\end{equation}
where for any $k\in \nint{K}$, the vectors $\1_{C_k}\in \bbR^n$ are defined by
\begin{equation}
  (\1_{C_k})_i=
  \begin{cases}
    1 & \text{ if } \, i \in C_k\\
    0 & \text{ otherwise }
  \end{cases},
  \text{ for } i=1,\cdots,|V| \enspace.
\end{equation}

Moreover, the orthogonal projection over $\ker(\Inct)$ is denoted by $\Pi$ and is the component-wise averaging given by
  \begin{equation}
    \left(\Pi (\beta) \right)_i =
    \frac{1}{|C_k|} \sum_{i \in C_k} \beta_i, \text{ where } k \text{ is such that } C_k \ni i, \text{ for } i=1,\dots,n .
  \end{equation}
    Furthermore, if $\G$ is a connected graph then $\ker(\Pi) = \Span(\1_n)$.
\end{lem}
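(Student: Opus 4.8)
The plan is to establish the three assertions in order, since each builds on the previous one, using only the combinatorial meaning of the incidence matrix. First I would describe $\ker(\Inct)$ explicitly. By the definition of $\Inct$, for any $\beta\in\bbR^n$ and any edge $e=\ens{i,j}\in\E$ the corresponding entry of $\Inct\beta$ equals $\beta_{\min(i,j)}-\beta_{\max(i,j)}$, so $\Inct\beta=0$ if and only if $\beta_i=\beta_j$ for every edge $\ens{i,j}$. The decisive step is to upgrade this \emph{edge-local} condition to the \emph{component-global} statement that $\beta$ is constant on each connected component: inside a fixed component $C_k$ any two vertices are joined by a path, and chaining the equalities $\beta_i=\beta_j$ along the edges of that path forces $\beta$ to be constant on $C_k$; conversely any vector constant on each component clearly lies in $\ker(\Inct)$. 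Such vectors are exactly the combinations $\sum_{k=1}^K c_k\1_{C_k}$, and since the supports $C_1,\dots,C_K$ are pairwise disjoint the indicators $\1_{C_k}$ are linearly independent, giving the direct-sum decomposition $\ker(\Inct)=\Span(\1_{C_1})\oplus\cdots\oplus\Span(\1_{C_K})$.

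Next I would compute the orthogonal projection $\Pi$. Disjointness of the components makes $\ens{\1_{C_1},\dots,\1_{C_K}}$ an \emph{orthogonal} basis of $\ker(\Inct)$, with $\norm{\1_{C_k}}^2=\abs{C_k}$. Since the orthogonal projection onto the span of an orthogonal family is the sum of the individual coordinate projections, I obtain
\[
\Pi(\beta)=\sum_{k=1}^K \frac{\dotp{\beta}{\1_{C_k}}}{\norm{\1_{C_k}}^2}\,\1_{C_k}=\sum_{k=1}^K \frac{1}{\abs{C_k}}\Big(\sum_{i\in C_k}\beta_i\Big)\1_{C_k}.
\]
Reading off coordinate $i$, which lies in the unique component $C_k$ containing it, yields exactly the component-wise averaging formula claimed.

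Finally, the connected case is the specialization $K=1$, $C_1=\V=\nint{n}$: then $\ker(\Inct)=\Span(\1_n)$ is one-dimensional and $\Pi$ is the rank-one averaging projector $\beta\mapsto(\tfrac{1}{n}\sum_{i}\beta_i)\1_n$ onto this line, whose image equals $\Span(\1_n)$; I read the final assertion as this identification of the image of $\Pi$ (namely $\ker(\Inct)$) with $\Span(\1_n)$. I expect the entire argument to be elementary linear algebra, and the only point demanding genuine care is the first step---promoting the edge-wise equalities $\beta_i=\beta_j$ to constancy of $\beta$ on each component---which is exactly where connectedness of each $C_k$ must be invoked through a path argument.
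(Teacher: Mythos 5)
Your proof is correct and follows the same route as the paper's: the kernel characterization is obtained exactly as in the paper, by promoting the edge-wise equalities $\beta_i=\beta_j$ to constancy on each connected component via a path argument. If anything, your write-up is more complete: the paper's proof only treats the connected case and appeals to ``tensorization'' for $K>1$, and it never actually verifies the component-wise averaging formula for $\Pi$, whereas your computation of $\Pi$ from the orthogonal basis $\1_{C_1},\dots,\1_{C_K}$ (using that the supports are disjoint, so $\norm{\1_{C_k}}^2=|C_k|$) supplies that missing step. Your reading of the final assertion as identifying the \emph{image} of $\Pi$ with $\Span(\1_n)$ is the sensible one; as literally written ($\ker(\Pi)=\Span(\1_n)$) the claim is a typo, since for a connected graph the kernel of the averaging projector is the hyperplane of mean-zero vectors, while its range is $\Span(\1_n)=\ker(\Inct)$.
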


\begin{proof}
  The proof can be done for the simple case of a connected graph (\ie $K=1$), and the result can be generalized by tensorization of graph for $K>1$ components.
  Hence, we assume that $K=1$.
  For any $\beta \in \ker({\Inct})$, the definition of the incidence matrix  yields that for all $(i,j) \in E, \beta_{i}=\beta_{j}$.
  Since all vertices are connected, by recursion all the $\beta_j$'s are identical, and $\beta \in \Span (\1_{n})=\Span (\1_{C_1})$.
  The converse is proved in the same way.
\end{proof}

\begin{lem}[Proposition E.2 in \cite{bellec2016slope}]
    \label{lem:stochastic}
    Let $g_1,\dots,g_p$ be centered Gaussian random variables (not necessarily independent) with variance at most $V>0$.
    Then,
    \begin{equation}
        \bbP\left(
            \max_{j=1,\dots,p} \frac{|g|^\downarrow_j}{\sqrt{V \log(2p/j)}}
            \le 4
        \right) \ge 1/2\enspace.
    \end{equation}
\end{lem}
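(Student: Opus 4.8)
The plan is to recast the statement as a uniform bound on exceedance counts and then control it by a first-moment (Markov) argument arranged over dyadically many scales, which is essentially the content of Proposition~E.2 in \cite{bellec2016slope}. First I would note that, writing $t_j = 4\sqrt{V\log(2p/j)}$ and $N(t) = \#\{i \in [p] : |g_i| > t\}$, the order-statistic identity $\{|g|_j^\downarrow > t\} = \{N(t) \ge j\}$ reduces the claim to showing that the bad event $B = \bigcup_{j=1}^p \{N(t_j) \ge j\}$ has probability at most $1/2$. The only distributional input I would use is the first moment: since $N(t) = \sum_{i=1}^p \mathbf{1}\{|g_i| > t\}$ and each $g_i$ is centered Gaussian with variance at most $V$, the Gaussian tail bound together with linearity of expectation gives $\mathbb{E}[N(t)] \le 2p\exp(-t^2/(2V))$ for every $t > 0$, regardless of the dependence between the $g_i$. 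In particular $\mathbb{E}[N(t_j)] \le 2p (j/(2p))^{8}$.

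The key difficulty is that, because the $g_i$ may be arbitrarily dependent, no exponential concentration (Chernoff/Bernstein) is available for $N(t)$, so only the first moment can be used. Consequently the naive union bound $\bbP(B) \le \sum_{j=1}^p \bbP(N(t_j) \ge j) \le \sum_j \mathbb{E}[N(t_j)]/j$ diverges (it grows linearly in $p$), and the union must instead be taken over only $O(\log p)$ scales. I would therefore peel the indices into dyadic blocks $j \in [2^m, 2^{m+1})$ for $m = 0,\dots,M-1$ with $M = \lfloor \log_2 p\rfloor$: on $B$ there is some $j$ with $N(t_j) \ge j$, and relaxing the threshold to $t_{2^{m+1}}$ and the count to $2^m$ shows that $\#\{i : |g_i| > t_{2^{m+1}}\} \ge 2^m$ for the corresponding $m$. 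Markov's inequality then bounds each block probability by $\mathbb{E}[N(t_{2^{m+1}})]/2^m \le (2p)^{-7} 2^{8} 2^{7m}$, and summing the resulting geometric series over $m$ (using $2^M \le p$) contributes a constant of order $2/(2^{7}-1) = 2/127$.

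The remaining obstacle is the top scale $j \in [2^M, p]$, where the inflated threshold $t_{2^{m+1}}$ becomes vacuous (its argument $2p/2^{m+1}$ approaches $1$), so this block must be treated directly: there $N(t_j) \ge j > p/2$ forces $N(t_p) \ge p/2$ with $t_p = 4\sqrt{V\log 2}$, and Markov gives $\bbP(N(t_p) \ge p/2) \le 2\,\mathbb{E}[N(t_p)]/p \le 2^{2-8} = 1/64$. Collecting the dyadic contribution and the top-scale contribution yields $\bbP(B) \le 1/64 + 2/127 < 1/2$, which is exactly where the choice of the constant $4$ (equivalently the exponent $8 = 4^2/2$ in the tail) enters: it makes every geometric series summable and the final constant comfortably below $1/2$. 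I expect the verification of this last constant, together with the careful separate handling of the vacuous top block, to be the only genuinely delicate point; everything else is the routine first-moment bookkeeping described above.
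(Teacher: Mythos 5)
The paper does not actually prove this lemma: it is imported verbatim as Proposition E.2 of \cite{bellec2016slope}, so there is no in-paper argument to compare yours against. Your proof is correct and self-contained. The reduction $\{|g|_j^\downarrow>t\}=\{N(t)\ge j\}$, the dependence-free first-moment bound $\mathbb{E}[N(t)]\le 2p\,e^{-t^2/(2V)}$ (linearity of expectation plus the individual Gaussian tail, which is all one can use without independence), the dyadic peeling of $[p]$ into blocks $[2^m,2^{m+1})$ with the relaxation $N(t_j)\ge j \Rightarrow N(t_{2^{m+1}})\ge 2^m$ (valid because $t_j$ is nonincreasing in $j$, hence $N(t_j)$ is nondecreasing), and the separate treatment of the top block $[2^M,p]$ all check out; I verified the constants, which give $\bbP(B)\le 2/127+1/64<1/2$ with a wide margin. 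Your diagnosis that the naive union bound over all $p$ indices fails (it grows linearly in $p$) is also accurate, and it is exactly why the union must be taken over $O(\log p)$ scales or the first moment rearranged in some equivalent way. This is the same basic mechanism as in the cited source --- everything rests on Markov's inequality applied to the exceedance count $N(t)$ --- so your argument is an acceptable substitute for the citation.
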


\bibliographystyle{plain}
\bibliography{biblio}

\providecommand{\CA}{C.-A}\providecommand{\PL}{P.-L}\providecommand{\JC}{J.-C}\providecommand{\CH}{C.-H}\providecommand{\YX}{Y.-X}
\begin{thebibliography}{10}

\bibitem{auger1989algorithms}
I.~E. Auger and C.~E. Lawrence.
\newblock Algorithms for the optimal identification of segment neighborhoods.
\newblock {\em Bull. Math. Biol.}, 51(1):39--54, 1989.

\bibitem{Bauschke_Combettes11}
H.~H. Bauschke and P.~L. Combettes.
\newblock {\em Convex analysis and monotone operator theory in {H}ilbert
  spaces}.
\newblock Springer, New York, 2011.

\bibitem{Beck_Teboulle09}
A.~Beck and M.~Teboulle.
\newblock A fast iterative shrinkage-thresholding algorithm for linear inverse
  problems.
\newblock {\em SIAM J. Imaging Sci.}, 2(1):183--202, 2009.

\bibitem{bellec2016slope}
P.~C Bellec, G.~Lecu{\'e}, and A.~B. Tsybakov.
\newblock Slope meets lasso: improved oracle bounds and optimality.
\newblock {\em arXiv preprint arXiv:1605.08651}, 2016.

\bibitem{Belloni_Chernozhukov_Wang11}
A.~Belloni, V.~Chernozhukov, and L.~Wang.
\newblock Square-root {Lasso}: pivotal recovery of sparse signals via conic
  programming.
\newblock {\em Biometrika}, 98(4):791--806, 2011.

\bibitem{best1990active}
M.~J. Best and N.~Chakravarti.
\newblock Active set algorithms for isotonic regression; a unifying framework.
\newblock {\em Math. Program.}, 47(1-3):425--439, 1990.

\bibitem{Bhatia97}
R.~Bhatia.
\newblock {\em Matrix analysis}, volume 169 of {\em Graduate Texts in
  Mathematics}.
\newblock Springer-Verlag, New York, 1997.

\bibitem{bickel2009simultaneous}
P.~J. Bickel, Y.~Ritov, and A.~B. Tsybakov.
\newblock Simultaneous analysis of lasso and dantzig selector.
\newblock {\em Ann. Statist.}, 37(4):1705--1732, 08 2009.

\bibitem{boeing2016osmnx}
G.~Boeing.
\newblock {OSMnx: New Methods for Acquiring, Constructing, Analyzing, and
  Visualizing Complex Street Networks}.
\newblock {\em ArXiv e-prints ArXiv:1611.01890}, 2016.

\bibitem{bogdan2015slope}
M.~Bogdan, E.~{van den Berg}, C.~Sabatti, W.~Su, and E.~J. Cand{\`e}s.
\newblock {SLOPE-adaptive} variable selection via convex optimization.
\newblock {\em Ann. Appl. Stat.}, 9(3):1103, 2015.

\bibitem{boucheron2013concentration}
S.~Boucheron, G.~Lugosi, and P.~Massart.
\newblock {\em Concentration inequalities: A nonasymptotic theory of
  independence}.
\newblock Oxford University Press, 2013.

\bibitem{Combettes_Pesquet11}
P.~L. Combettes and {\JC}.~Pesquet.
\newblock Proximal splitting methods in signal processing.
\newblock In {\em Fixed-point algorithms for inverse problems in science and
  engineering}, volume~49 of {\em Springer Optim. Appl.}, pages 185--212.
  Springer, New York, 2011.

\bibitem{Dalalyan_Hebiri_Lederer17}
A.~S. Dalalyan, M.~Hebiri, and J.~Lederer.
\newblock On the prediction performance of the {Lasso}.
\newblock {\em Bernoulli}, 23(1):552--581, 2017.

\bibitem{Deledalle_Papadakis_Salmon_Vaiter16}
{\CA}.~Deledalle, N.~Papadakis, J.~Salmon, and S.~Vaiter.
\newblock {{CLEAR}: Covariant {LEAst-square Re-fitting} with applications to
  image restoration}.
\newblock {\em SIAM J. Imaging Sci.}, 10(1):243--284, 2017.

\bibitem{Elad_Milanfar_Rubinstein07}
M.~Elad, P.~Milanfar, and R.~Rubinstein.
\newblock Analysis versus synthesis in signal priors.
\newblock {\em Inverse problems}, 23(3):947--968, 2007.

\bibitem{fan2017pw}
Z.~{Fan} and L.~{Guan}.
\newblock {$\ell_0$-estimation of piecewise-constant signals on graphs}.
\newblock {\em ArXiv e-prints ArXiv:1703.01421}, 2017.

\bibitem{hutter2016optimal}
{\JC}.~H{\"u}tter and P.~Rigollet.
\newblock Optimal rates for total variation denoising.
\newblock {\em ArXiv e-prints ArXiv:1603.09388}, 2016.

\bibitem{Mammen_vandeGeer}
E.~Mammen and S.~{van de Geer}.
\newblock Locally adaptive regression splines.
\newblock {\em Ann. Statist.}, 25(1):387--413, 1997.

\bibitem{Ndiaye_Fercoq_Gramfort_Leclere_Salmon16}
E.~Ndiaye, O.~Fercoq, A.~Gramfort, V.~{Lecl\`ere}, and J.~Salmon.
\newblock Efficient smoothed concomitant lasso estimation for high dimensional
  regression.
\newblock In {\em NCMIP}, 2017.

\bibitem{ng2001spectral}
A.~Y. Ng, M.~I. Jordan, and Y.~Weiss.
\newblock On spectral clustering: Analysis and an algorithm.
\newblock In {\em NIPS}, volume~14, pages 849--856, 2001.

\bibitem{Owen07}
A.~B. Owen.
\newblock A robust hybrid of lasso and ridge regression.
\newblock {\em Contemporary Mathematics}, 443:59--72, 2007.

\bibitem{Parikh_Boyd_Chu_Peleato_Eckstein13}
N.~Parikh, S.~Boyd, E.~Chu, B.~Peleato, and J.~Eckstein.
\newblock Proximal algorithms.
\newblock {\em Foundations and Trends in Machine Learning}, 1(3):1--108, 2013.

\bibitem{Pedregosa_etal11}
F.~Pedregosa, G.~Varoquaux, A.~Gramfort, V.~Michel, B.~Thirion, O.~Grisel,
  M.~Blondel, P.~Prettenhofer, R.~Weiss, V.~Dubourg, J.~Vanderplas, A.~Passos,
  D.~Cournapeau, M.~Brucher, M.~Perrot, and E.~Duchesnay.
\newblock Scikit-learn: Machine learning in {P}ython.
\newblock {\em J. Mach. Learn. Res.}, 12:2825--2830, 2011.

\bibitem{Rudin_Osher_Fatemi92}
L.~I. Rudin, S.~Osher, and E.~Fatemi.
\newblock Nonlinear total variation based noise removal algorithms.
\newblock {\em Phys. D}, 60(1-4):259--268, 1992.

\bibitem{sadhanala2016total}
V.~Sadhanala, {\YX}.~Wang, and R.~J. Tibshirani.
\newblock Total variation classes beyond 1d: Minimax rates, and the limitations
  of linear smoothers.
\newblock In {\em NIPS}, pages 3513--3521, 2016.

\bibitem{sharpnack2012sparsistency}
J.~Sharpnack, A.~Singh, and A.~Rinaldo.
\newblock Sparsistency of the edge lasso over graphs.
\newblock In {\em AISTATS}, volume~22, pages 1028--1036, 2012.

\bibitem{shi2000normalized}
J.~Shi and J.~Malik.
\newblock Normalized cuts and image segmentation.
\newblock {\em {IEEE} Trans. Pattern Anal. Mach. Intell.}, 22(8):888--905,
  2000.

\bibitem{su2016slope}
W.~Su and E.~J. Cand{\`e}s.
\newblock Slope is adaptive to unknown sparsity and asymptotically minimax.
\newblock {\em Ann. Statist.}, 44(3):1038--1068, 2016.

\bibitem{Sun_Zhang12}
T.~Sun and {\CH}.~Zhang.
\newblock Scaled sparse linear regression.
\newblock {\em Biometrika}, 99(4):879--898, 2012.

\bibitem{Tibshirani_Saunders_Rosset_Zhu_Knight05}
R.~Tibshirani, M.~A. Saunders, S.~Rosset, J.~Zhu, and K.~Knight.
\newblock Sparsity and smoothness via the fused {LASSO}.
\newblock {\em J. R. Stat. Soc. Ser. B Stat. Methodol.}, 67(1):91--108, 2005.

\bibitem{vandegeer2009conditions}
S.~{van de Geer} and P.~B{\"u}hlmann.
\newblock On the conditions used to prove oracle results for the {Lasso}.
\newblock 3:1360--1392, 2009.

\bibitem{viallon2016robustness}
V.~Viallon, S.~Lambert-Lacroix, H.~Hoefling, and F.~Picard.
\newblock On the robustness of the generalized fused lasso to prior
  specifications.
\newblock {\em Statistics and Computing}, 26(1-2):285--301, 2016.

\bibitem{von2007tutorial}
U.~{Von Luxburg}.
\newblock A tutorial on spectral clustering.
\newblock {\em Statistics and computing}, 17(4):395--416, 2007.

\bibitem{watts1999networks}
D.~J. Watts.
\newblock Networks, dynamics, and the small-world phenomenon.
\newblock {\em American Journal of Sociology}, 105(2):493--527, 1999.

\bibitem{ye2010rate}
F.~Ye and {\CH}.~Zhang.
\newblock Rate minimaxity of the lasso and dantzig selector for the lq loss in
  lr balls.
\newblock {\em J. Mach. Learn. Res.}, 11(Dec):3519--3540, 2010.

\bibitem{Zeng_Figueiredo14}
X.~Zeng and M.~A.~T. {Figueiredo}.
\newblock {The Ordered Weighted $\ell_1$ Norm: Atomic Formulation, Projections,
  and Algorithms}.
\newblock {\em ArXiv e-prints arXiv:1409.4271}, 2014.

\end{thebibliography}

\end{document}